\newcommand*{\mailto}[1]{\href{mailto:#1}{\nolinkurl{#1}}}
\newtheorem{theorem}{Theorem}[section]
\newtheorem{lemma}[theorem]{Lemma}
\newtheorem{proposition}[theorem]{Proposition}
\newtheorem{corollary}[theorem]{Corollary}
\newcommand{\R}{{\mathbb R}}
\newcommand{\N}{{\mathbb N}}
\newcommand{\C}{{\mathbb C}}
\newcommand{\spr}[2]{\langle #1 , #2 \rangle}
\newcommand{\indik}{\mathbbm{1}}
\newcommand{\E}{\mathrm{e}}
\newcommand{\im}{\mathrm{Im}}
\newcommand{\supp}{\mathrm{supp}}
\newcommand{\M}{\mathcal{M}}
\newcommand{\SP}{\mathcal{S}}
\newcommand{\T}{\mathcal{T}}
\newcommand{\Lrom}{L^2((a,b);\omega)}
\numberwithin{equation}{section}
\begin{document}

\title[Two inverse spectral problems for singular Krein strings]{Two inverse spectral problems for~a~class~of~singular~Krein~strings}

\author[J.\ Eckhardt]{Jonathan Eckhardt}
\address{Faculty of Mathematics\\ University of Vienna\\
Nordbergstrasse 15\\ 1090 Wien\\ Austria}
\email{\mailto{jonathan.eckhardt@univie.ac.at}}
\urladdr{\url{http://homepage.univie.ac.at/jonathan.eckhardt/}}

\thanks{\href{http://dx.doi.org/10.1093/imrn/rnt049}{Int.\ Math.\ Res.\ Not.\ IMRN {\bf 2014}, no.~13, 3692--3713}}
\thanks{{\it Research supported by the Austrian Science Fund (FWF) under Grant No.\ Y330}}

\keywords{Krein string, inverse spectral theory, inverse three-spectra problem}
\subjclass[2010]{Primary 34A55, 34B05; Secondary 47E05, 34L40}

\begin{abstract}
 We solve the inverse problem from the spectral measure and the inverse three-spectra problem for the class of singular Krein strings on a finite interval with trace class resolvents. 
 In particular, this includes a complete description of all possible spectral measures and three (Dirichlet) spectra associated with this class of Krein strings.  
 The solutions of these inverse problems are obtained by approximation with Stieltjes strings.  
\end{abstract}

\maketitle

\section{Introduction}

 Let $(a,b)$ be some bounded interval and consider the class $\M$ of all (positive) Borel measures $\omega$ on $(a,b)$ for which the integral 
 \begin{align*}
  \int_{a}^{b} (b-x)(x-a) d\omega(x) 
 \end{align*}
 is finite. The spectral problem for a string with fixed endpoints and mass distribution given by some $\omega\in\M$ is the boundary value problem  
 \begin{align*}
  -u'' = z\, u\, \omega  
 \end{align*}
 on $(a,b)$ with Dirichlet boundary conditions at the endpoints and a complex spectral parameter $z\in\C$. 
 Of course, this differential equation has to be understood in a distributional sense (we refer to Section~\ref{sec2} for some details).
 The aim of the present article is to solve two inverse spectral problems for this class of singular strings. 
 
 First of all, for each such string with mass distribution $\omega\in\M$ there is an associated scalar spectral measure (cf.\ \cite[\S 6]{ka67}, \cite[Subsection~2.7]{kac}) which belongs to the class $\SP$ of all (positive) discrete measures $\rho$ on $\R^+$ for which the sum
 \begin{align*}
  \sum_{\lambda\in\supp(\rho)} \frac{1}{\lambda}
 \end{align*}
 is finite. In Section~\ref{secISP} we will solve the corresponding inverse problem, that is, we show that the correspondence between mass distributions in $\M$  and spectral  measures in $\SP$ is one-to-one. 
 Moreover, we are also able to characterize those mass distributions which are finite near some endpoint in terms of the asymptotic behavior of the corresponding spectral measure. 
 Of course, the case of a regular left endpoint overlaps with Krein's inverse spectral theory of strings on a half-axis \cite{dymmck}, \cite{kackrein}, \cite{kac}, \cite{kotwat} (up to the minor differences regarding heavy endpoints and boundary conditions). 
 At this point, let us also mention that generalizations to strings with singular left endpoints have already been considered in \cite{ka67} and some inverse spectral results have been obtained in \cite{ko75}, \cite{ko76}, \cite{ko07}. 
 In particular, \cite{ko75} considers the class of singular strings with the left endpoint being $-\infty$ and the mass distribution satisfying a growth restriction there, which guarantees that the resolvents of the strings, restricted to a neighborhood of $-\infty$, are trace class (cf.\ \cite[Subsection~2.7]{kac}).
 There, it has been shown that a corresponding spectral measure determines the mass distribution of such a singular string up to a possible shift. 
 The proof of this result is very similar to ours, which also relies on a variant of de Branges' subspace ordering theorem \cite{ko76}. 
 Regarding the existence part of this inverse problem, it has been shown in \cite[Theorem~4.1]{ko75} that at least measures with a polynomial growth restriction arise as spectral measures of such singular strings. 
 This result has been enhanced in \cite{ko07}, where the class of singular strings corresponding to this polynomial growth restriction of the spectral measure has been described explicitly in terms of a growth restriction of the mass distribution near $-\infty$. 
 The present article shows that it is also possible to describe the set of spectral measures corresponding to the class of mass distributions with an additional growth restriction near the right endpoint (which ensures the resolvents to be trace class).  
 In contrast to the results in \cite{ko75}, \cite{ko07}, spectral measures from this class may indeed exhibit arbitrary growth. 
  
 Secondly, we will consider the inverse three-spectra problem for this class of strings. 
  By the three (Dirichlet) spectra associated with a mass distribution $\omega\in\M$ (and a fixed interior point $c\in(a,b)$) we mean the spectra $\sigma(S)$, $\sigma(S_a)$, and $\sigma(S_b)$, where $\sigma(S)$ is the spectrum of the whole string and $\sigma(S_a)$, $\sigma(S_b)$ are the spectra of the string restricted to $(a,c)$, $(c,b)$, respectively with an additional Dirichlet boundary condition at the point $c$. 
  The corresponding inverse problem has been treated for example in \cite{gessimts}, \cite{hrymyk}, \cite{pivovarchik} for Schr\"odinger operators (corresponding to smooth strings) and in \cite{boypiv} for Stieltjes strings (that is, strings consisting of a finite number of point masses). 
  It turns out that these three spectra belong to the class $\T$ which consists of all triples $(\sigma, \sigma_a, \sigma_b)$ of discrete subsets of $\R^+$ for which the sum 
  \begin{align*}
   \sum_{\lambda\in\sigma} \frac{1}{\lambda}
  \end{align*}
  is finite,  the intersection $\sigma_a\cap\sigma_b$ is contained in $\sigma$ and the set $\sigma_a\cup\sigma_b$ interlaces $\sigma\backslash(\sigma_a\cap\sigma_b)$. 
  To be precise, we say some discrete set $A$ interlaces $B$ if $b_1 < a_1 < b_2 < a_2 < \cdots $, where $A=\lbrace a_k\rbrace_{k=1}^{n_a}$ and $B=\lbrace b_k \rbrace_{k=1}^{n_b}$ for some $n_a$, $n_b\in\N_0\cup\lbrace\infty\rbrace$ and strictly increasing sequences $(a_k)_{k=1}^{n_a}$ and $(b_k)_{k=1}^{n_b}$. 
   Moreover, if one of these sets is finite, then so has to be the other one with $n_b=n_a$ or $n_b=n_a+1$ and they either end with $\cdots < b_{n_b} < a_{n_a}$ or with $\cdots < a_{n_a} < b_{n_b}$. 
  Alternatively, we could also define $\T$ to consist of all triples $(\sigma,\sigma_a,\sigma_b)$ of discrete subsets of $\R^+$  such that for each $\lambda\in\sigma$ we have $\lambda\in\sigma_a$ if and only if $\lambda\in\sigma_b$ and for which the function
 \begin{align*}
   \prod_{\lambda\in\sigma} \biggl(1-\frac{z}{\lambda}\biggr)^{-1} \prod_{\mu_a\in\sigma_a} \biggl(1-\frac{z}{\mu_a}\biggr) \prod_{\mu_b\in\sigma_b} \biggl(1-\frac{z}{\mu_b}\biggr), \quad z\in\C\backslash\R, 
 \end{align*}
  (where the products are assumed to converge locally uniformly) is a Herglotz--Nevanlinna function, i.e., maps the upper complex half-plane into itself or is a real constant.
 The equivalence of these definitions follows essentially from the interlacing condition for (necessarily simple) zeros and poles of meromorphic Herglotz--Nevanlinna functions; see e.g.\ \cite[Theorem~2.1]{gessimts}. 
 In contrast to the inverse problem from the spectral measure, there is no one-to-one correspondence between $\M$ and $\T$, as already noticed in \cite{gessimts} for Schr\"odinger operators and in \cite{boypiv} for Stieltjes strings.
   Nevertheless, the map $\M\rightarrow\T$ turns out to be onto and we are able to describe all possible strings with the same three spectra in terms of additional spectral data. 
   
 The proofs of our theorems rely on the corresponding results for Stieltjes strings, discussed in \cite{bss}, \cite{boypiv} or less immediately applicable (due to the difference regarding heavy endpoints and boundary conditions) also in \cite{dymmck}, \cite{kackrein}, \cite{kotwat}.
  In Section~\ref{secCont} we will prove some continuity results for the spectral quantities on isospectral sets of $\M$, which allow us to lift the results for Stieltjes strings to our class of mass distributions. The two inverse problems will be discussed in the consecutive sections. 
 
  Regarding integration of a continuous function $g$ on $(a,b)$ with respect to some measure $\omega\in\M$ we will use the following convenient notation 
\begin{align*}
 \int_\alpha^\beta g(x)d\omega(x) = \begin{cases}
                                     \int_{[\alpha,\beta)} g(x) d\omega(x),          &  \alpha < \beta, \\
                                     0,                                     &  \alpha=\beta, \\
                                     -\int_{[\beta,\alpha)} g(x) d\omega(x),         &  \alpha >\beta, 
                                    \end{cases}
\end{align*} 
 for $\alpha$, $\beta\in(a,b)$. In particular, the integration by parts formula takes the form 
\begin{align*}
 \int_\alpha^\beta f(x) g(x) d\omega(x) =  f(\beta) \int_\alpha^\beta g(x)d\omega(x) - \int_\alpha^\beta f'(x) \int_\alpha^x g(s)d\omega(s)\, dx
\end{align*}
 for locally absolutely continuous functions $f$ on $(a,b)$, which will be used frequently.

\section{Spectral theory for a singular string}\label{sec2}

 In this section we will discuss the spectral problem for a string with fixed endpoints and arbitrary mass distribution $\omega\in\M$, as far as it is needed to solve our inverse problems. 
 A solution $u$ of the differential equation  
 \begin{align}\label{eqnString}
  -u'' = z\, u\, \omega 
 \end{align}
 on the interval $(a,b)$ with a complex spectral parameter $z\in\C$, is a complex-valued, locally absolutely continuous function on $(a,b)$ such that 
 \begin{align}\label{eqnDEM}
  u'(\alpha) - u'(\beta) = z \int_\alpha^\beta u(x) d\omega(x) 
 \end{align}
 for almost all $\alpha$, $\beta\in(a,b)$.  
 In particular, the derivative of a solution $u$ of~\eqref{eqnString} has a unique left-continuous representative which is locally of bounded variation.
 Here and henceforth, for definiteness, this representative will always be denoted with $u'$ such that~\eqref{eqnDEM} holds for all $\alpha$, $\beta\in(a,b)$.
 Together with the boundary conditions
 \begin{align}\label{eqnBC}
      \lim_{\alpha\rightarrow a} u(\alpha)-u'(\alpha)(\alpha-a)= 0 \quad\text{and}\quad \lim_{\beta\rightarrow b} u(\beta) - u'(\beta) (\beta-b) = 0
 \end{align}
 (these limits are known to exist for solutions of \eqref{eqnString} which are square integrable with respect to $\omega$ near the respective endpoint), equation~\eqref{eqnString} gives rise to a unique self-adjoint linear operator $S$ in the weighted Hilbert space $\Lrom$ (see e.g.\ \cite[Section~6]{measureSL} for details).
  Hereby note that our differential equation is in the limit-circle case at an endpoint if and only if $\omega$ is finite near this endpoint. In this case the boundary conditions reduce to Dirichlet boundary conditions. Otherwise, in the limit-point case the boundary conditions are known to be superfluous.  
 As a first step we introduce solutions of~\eqref{eqnString}, which are square integrable with respect to $\omega$ near one endpoint, satisfy the boundary condition there and depend analytically on the spectral parameter (cf.\ \cite[Theorem~9]{ka67}).
 
 \begin{theorem}\label{thmPHI}
 For each $z\in\C$ there is a unique solution $\phi_a(z,\cdot\,)$ of~\eqref{eqnString} with the spatial asymptotics 
 \begin{align*}
 \phi_a(z,x) \sim x-a \quad\text{and}\quad \phi_a'(z,x) \sim 1
 \end{align*} 
 as $x\rightarrow a$. Moreover, the functions $\phi_a(\,\cdot\,,x)$ and $\phi_a'(\,\cdot\,,x)$ are real entire and of finite exponential type for each $x\in(a,b)$.
 \end{theorem}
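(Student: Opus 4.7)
The plan is to convert the problem into a Volterra-type integral equation incorporating the prescribed behavior at $a$ and to solve it by Picard iteration. Formally integrating \eqref{eqnDEM} from $a$ (with $u'(\alpha) \to 1$) gives $u'(x) = 1 - z\int_a^x u(s)\,d\omega(s)$, and integrating once more (using $u(\alpha) \sim \alpha - a$) together with Fubini yields
\begin{equation*}
 u(x) = (x-a) - z\int_a^x (x-s)\,u(s)\,d\omega(s).
\end{equation*}
Conversely, any continuous solution of this integral equation reproduces \eqref{eqnDEM} and exhibits the required asymptotic behavior, so it suffices to produce a unique continuous solution depending nicely on $z$.

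To do so, I would iterate: put $u_0(x) := x - a$ and $u_{n+1}(x) := -z\int_a^x (x-s)\,u_n(s)\,d\omega(s)$. Setting $W(x) := \int_a^x (s-a)\,d\omega(s)$, one has $W(x) \leq (b-x)^{-1}\int_a^b (b-s)(s-a)\,d\omega(s) < \infty$ because $\omega \in \M$, and the relation $dW(s) = (s-a)\,d\omega(s)$ yields $\int_a^x (s-a) W(s)^n\,d\omega(s) = W(x)^{n+1}/(n+1)$. Combined with the crude estimate $x-s \leq x-a$ on $[a,x]$, an induction gives
\begin{equation*}
 |u_n(x)| \leq (x-a)\,\frac{(|z|W(x))^n}{n!}.
\end{equation*}
Hence $\phi_a(z,x) := \sum_{n \geq 0} u_n(x)$ converges absolutely and locally uniformly on $\C \times (a,b)$ with $|\phi_a(z,x)| \leq (x-a)\,\E^{|z|W(x)}$. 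Since each $u_n(x)$ is homogeneous of degree $n$ in $z$ with real coefficient (as a function of $x$), $\phi_a(\,\cdot\,,x)$ is real entire of exponential type at most $W(x) < \infty$; the analogous statement for $\phi_a'(\,\cdot\,,x)$ follows from $\phi_a'(z,x) = 1 - z\int_a^x \phi_a(z,s)\,d\omega(s)$, which yields $|\phi_a'(z,x)| \leq 1 + |z|W(x)\,\E^{|z|W(x)}$.

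For uniqueness, the difference $w := \phi_a(z,\,\cdot\,) - v$ of two solutions with the prescribed asymptotics satisfies the homogeneous Volterra equation and $w(s) = o(s-a)$ as $s \to a$, so $C := \sup_{s \in [a,x]} |w(s)|/(s-a)$ is finite; iterating the Volterra bound gives $|w(x)| \leq C(x-a)(|z|W(x))^n/n! \to 0$ for each fixed $x$, hence $w \equiv 0$.

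The main obstacle is the possible infiniteness of $\omega$ near $a$, which precludes a literal initial-value problem at $a$ and makes the naive Picard iteration (with a constant initial guess) fail. The crucial device is to absorb the factor $(s-a)$, which is built into the asymptotic shape of the solution, into the measure: the finite measure $dW = (s-a)\,d\omega$ is what makes the iteration close cleanly despite $\omega$ itself possibly failing to be locally finite on $(a,b)$.
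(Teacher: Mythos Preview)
Your argument is essentially the paper's: both reduce to the Volterra equation $\phi_a(z,x)=(x-a)-z\int_a^x(x-s)\phi_a(z,s)\,d\omega(s)$ and iterate with factorial bounds, the only difference being that the paper first divides through by $x-a$ and works with $m_a=\phi_a/(x-a)$ in $C_b(a,b)$ against the two-sided weight $(b-s)(s-a)/(b-a)$, which yields an exponential-type bound uniform in $x\in(a,b)$ (used later) rather than your $x$-dependent bound $W(x)$. One small correction: your identity $\int_a^x W(s)^n\,dW(s)=W(x)^{n+1}/(n+1)$ is in general only the inequality $\le$ when $\omega$ has atoms (this is exactly the point where the paper invokes the Lebesgue--Stieltjes substitution rule of \cite{tsub}), but the inequality is all your induction requires.
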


\begin{proof}
 First of all we show that for each $z\in\C$, the integral equation
\begin{align}\label{eqnIsoInt}
 m_a(z,x) = 1 - z \int_{a}^{x}  \frac{(x-s)(s-a)}{x-a} m_a(z,s)d\omega(s), \quad x\in(a,b),
\end{align}
has a unique bounded continuous solution $m_a(z,\cdot\,)$.
Therefore consider the integral operator $K_a$ on $C_b(a,b)$ defined by 
\begin{align*}
 K_a f(x) = \int_{a}^{x}  \frac{(x-s)(s-a)}{x-a} f(s)d\omega(s), \quad x\in(a,b),~f\in C_b(a,b),
\end{align*}
where $C_b(a,b)$ is the space of bounded continuous functions on $(a,b)$. 
 We show that for each $f\in C_b(a,b)$ and $n\in\N$ we have the estimate
\begin{align*}
 \sup_{a<s<x} \left|K_a^n f(s)\right| \leq \frac{1}{n!} \biggl(\int_{a}^x p(s) d\omega(s)\biggr)^n \sup_{a<s<x} \left|f(s)\right|, \quad x\in(a,b),
\end{align*}
where $p$ is the polynomial given by $(b-a)p(s)=(b-s)(s-a)$, $s\in(a,b)$. 
 In fact, from a simple estimate we get inductively
\begin{align*}
 \sup_{a<s<x} \left|K_a^n f(s)\right| & \leq \sup_{a<s<x} \int_{a}^s  \left|K_a^{n-1}f(r)\right| p(r) d\omega(r) \\
  & \leq \frac{1}{(n-1)!} \int_{a}^x  \biggl(\int_{a}^r p d\omega\biggr)^{n-1} p(r) d\omega(r)\, \sup_{a<s<x} \left|f(s)\right|, \quad x\in(a,b).
\end{align*}
Now an application of the substitution rule for Lebesgue--Stieltjes integrals~\cite{tsub} yields the claim. In particular this shows  that 
\begin{align}\label{eqnKneu}
 \|K_a^n\| \leq \frac{1}{n!} \biggl(\int_a^b \frac{(b-x)(x-a)}{b-a} d\omega(x) \biggr)^n, \quad n\in\N,
\end{align}
and hence the Neumann series 
\begin{align*}
 m_a(z,x) = \sum_{n=0}^\infty (-1)^n z^n K_a^n 1(x) = (I+zK_a)^{-1} 1(x), \quad x\in(a,b),~z\in\C,
\end{align*}
converges absolutely, uniformly in $x\in(a,b)$ and even locally uniformly in $z\in\C$.
 Thus $m_a(z,\cdot\,)$ is the unique solution in $C_b(a,b)$ of the integral equation~\eqref{eqnIsoInt}. Moreover, integrating the right-hand side of~\eqref{eqnIsoInt} by parts shows that this function is locally absolutely continuous with derivative given by
\begin{align}\label{eqnIntEqnDer}
 m_a'(z,x) =  - z\int_{a}^{x} \biggl(\frac{s-a}{x-a}\biggr)^2 m_a(z,s)d\omega(s), \quad x\in(a,b),~z\in\C.
\end{align}
Therefore, for each $z\in\C$ we have the spatial asymptotics 
\begin{align*}
 m_a(z,x) \rightarrow 1 \quad\text{and}\quad m_a'(z,x)\rightarrow 0,
\end{align*}
as $x\rightarrow a$. Indeed, this follows from the integral equation~\eqref{eqnIsoInt} and~\eqref{eqnIntEqnDer} in view of the fact that the function $m_a(z,\cdot\,)$ is uniformly bounded. 
 Now the functions 
\begin{align*}
\phi_a(z,x) = (x-a) m_a(z,x), \quad x\in(a,b),~z\in\C,
\end{align*}
 satisfy the integral equations 
\begin{align*}
 \phi_a(z,x) & = x-a - z \int_{a}^x (x-s) \phi_a(z,s)d\omega(s),\quad x\in(a,b),~z\in\C,
\end{align*}
 and hence are solutions of~\eqref{eqnString} (see e.g.~\cite[Proposition~3.3]{measureSL}).
The spatial asymptotics of $\phi_a(z,\cdot\,)$ near $a$ easily follow from the corresponding results for the function $m_a(z,\cdot\,)$. 
 Also note that these asymptotics uniquely determine the solution $\phi_a(z,\cdot\,)$. 
Finally, the Neumann series and the estimates in~\eqref{eqnKneu} guarantee that $m_a(\,\cdot\,,x)$ is real entire and of finite exponential type, uniformly for all $x\in(a,b)$. Hence we see from~\eqref{eqnIntEqnDer} that $m_a'(\,\cdot\,,x)$ is also real entire with finite exponential type for each $x\in(a,b)$. Of course, this proves that the functions $\phi_a(\,\cdot\,,x)$ and $\phi_a'(\,\cdot\,,x)$ are real entire and of finite exponential type for each $x\in(a,b)$. 
\end{proof}

 Of course, a similar calculation shows that for each $z\in\C$ there is a unique solution $\phi_b(z,\cdot\,)$ of~\eqref{eqnString} with the spatial asymptotics
 \begin{align*}
  \phi_b(z,x) \sim b-x \quad\text{and}\quad \phi_b'(z,x) \sim -1
 \end{align*}
 as $x\rightarrow b$. Again the functions $\phi_b(\,\cdot\,,x)$ and $\phi_b'(\,\cdot\,,x)$ are real entire and of finite exponential type for each $x\in(a,b)$. 
 Note that, because of  the spatial asymptotics, for $z=0$ these solutions are given explicitly by
 \begin{align*}
  \phi_a(0,x) = x-a \quad\text{and}\quad \phi_b(0,x) = b-x, \quad x\in(a,b).
 \end{align*}
 Furthermore, the solutions $\phi_a(z,\cdot\,)$ and $\phi_b(z,\cdot\,)$, $z\in\C$ are square integrable with respect to $\omega$ near $a$, $b$ respectively and satisfy the boundary condition~\eqref{eqnBC} there. 
 In particular, this guarantees that the spectrum of $S$ is purely discrete and simple in view of~\cite[Theorem~8.5]{measureSL} and~\cite[Theorem~9.6]{measureSL}.   Consequently, some $\lambda\in\C$ is an eigenvalue of $S$ if and only if the solutions $\phi_a(\lambda,\cdot\,)$ and $\phi_b(\lambda,\cdot\,)$ are linearly dependent, that is, their Wronskian 
\begin{align*}
 W(z) = \phi_b(z,x)\phi_a'(z,x) - \phi_b'(z,x)\phi_a(z,x), \quad x\in(a,b),~z\in\C,
\end{align*}
vanishes in $\lambda$. 
 In this case we have   
\begin{align*} 
 \phi_b(\lambda,x) = (-1)^{\vartheta_\lambda} c_{\lambda} \phi_a(\lambda,x), \quad x\in(a,b),
\end{align*}
 for some $\vartheta_\lambda\in\lbrace0, 1\rbrace$ and some positive real $c_{\lambda}\in\R^+$, referred to as the coupling constant associated with the eigenvalue $\lambda$.  Moreover, the quantity 
\begin{align*}
 \gamma_{\lambda}^{2} = \int_a^b |\phi_a(\lambda,x)|^2 d\omega(x)
\end{align*}
 is finite and referred to as the norming constant associated with the eigenvalue $\lambda$.
 An integration by parts, using the spatial asymptotics of our solutions shows that 
 \begin{align*}
  \lambda\gamma_{\lambda}^{2} = \lambda \int_a^b |\phi_a(\lambda,x)|^2 d\omega(x) = \int_a^b |\phi_a'(\lambda,x)|^2 dx,
 \end{align*}
 which guarantees that the spectrum of $S$ is strictly positive. The following lemma relates all these spectral quantities.

\begin{lemma}\label{lemWderlam}
 For each eigenvalue $\lambda\in\sigma(S)$ we have
\begin{align}\label{eqnWlam}
 - \dot{W}(\lambda) = \int_a^b \phi_a(\lambda,x)\phi_b(\lambda,x)d\omega(x) = (-1)^{\vartheta_\lambda} c_{\lambda} \gamma_{\lambda}^{2} \not=0,
\end{align}
where the dot denotes differentiation with respect to the spectral parameter. 
\end{lemma}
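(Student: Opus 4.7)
The second equality is immediate from the proportionality $\phi_b(\lambda,\cdot\,) = (-1)^{\vartheta_\lambda}c_\lambda\phi_a(\lambda,\cdot\,)$: substitution into the integral yields $(-1)^{\vartheta_\lambda}c_\lambda\gamma_\lambda^{2}$, which is non-zero because both $c_\lambda$ and $\gamma_\lambda^{2}$ are strictly positive. The substance of the lemma is therefore the first equality, for which I plan to use a Lagrange-type identity obtained by differentiating the string equation in the spectral parameter.

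For the core computation, set $A(z,x) = \phi_b(z,x)\dot\phi_a'(z,x)-\phi_b'(z,x)\dot\phi_a(z,x)$ and $B(z,x) = \dot\phi_b(z,x)\phi_a'(z,x)-\dot\phi_b'(z,x)\phi_a(z,x)$, so that $\dot W(z) = A(z,x) + B(z,x)$ for every $x\in(a,b)$ by direct differentiation of the Wronskian. Starting from $-\phi_b''(z,\cdot\,)=z\phi_b(z,\cdot\,)\omega$ and, after differentiation in $z$, $-\dot\phi_a''(z,\cdot\,)=\phi_a(z,\cdot\,)\omega+z\dot\phi_a(z,\cdot\,)\omega$ (both in the integrated sense of~\eqref{eqnDEM}), two applications of the Lebesgue--Stieltjes integration by parts formula from the introduction would yield
\[
 A(z,\beta) - A(z,\alpha) = -\int_\alpha^\beta \phi_a(z,x)\phi_b(z,x)\,d\omega(x), \quad \alpha,\beta\in(a,b),
\]
and the symmetric computation gives the analogous identity for $B$ with the opposite sign on the right-hand side (consistent with $A+B$ being $x$-independent).

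Now specialise to $z=\lambda$ and pass to the limits $\alpha\to a$ and $\beta\to b$. Differentiating the integral equation~\eqref{eqnIsoInt} in $z$ yields $\dot m_a(\lambda,\alpha),\dot m_a'(\lambda,\alpha)\to 0$ as $\alpha\to a$, hence $\dot\phi_a(\lambda,\alpha),\dot\phi_a'(\lambda,\alpha)\to 0$; the analogous statement for $\dot\phi_b$ holds at $b$. Combined with Theorem~\ref{thmPHI} and the proportionality at the eigenvalue (which bounds $\phi_b,\phi_b'$ near $a$ and $\phi_a,\phi_a'$ near $b$), one obtains $A(\lambda,\alpha)\to 0$ as $\alpha\to a$ and $B(\lambda,\beta)\to 0$ as $\beta\to b$. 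Since $A(\lambda,x)+B(\lambda,x)=\dot W(\lambda)$ is constant in $x$, these two limits force $A(\lambda,b^-)=\dot W(\lambda)$. Passing to $\alpha\to a$ and $\beta\to b$ in the displayed identity therefore delivers $\dot W(\lambda)=-\int_a^b \phi_a(\lambda,x)\phi_b(\lambda,x)\,d\omega(x)$, which is the first equality.

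The main technical subtlety I anticipate is the rigorous derivation of the Lagrange-type identity for $A$: because $\phi_b'$ and $\dot\phi_a'$ are only left-continuous functions of bounded variation, the formal pointwise identity $(\phi_b\dot\phi_a'-\phi_b'\dot\phi_a)'=-\phi_a\phi_b\omega$ cannot be manipulated directly. One handles this by applying the Lebesgue--Stieltjes integration by parts formula from the introduction to $\int \phi_b\,d(\dot\phi_a')$ and $\int \phi_b'\,d\dot\phi_a$ separately, substituting the measure-valued expressions for $d(\dot\phi_a')$ and $d\phi_b'$ coming from the string equation and its $z$-derivative, and subtracting.
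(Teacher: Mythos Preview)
Your proposal is correct and follows essentially the same route as the paper's proof: your functions satisfy $A=-W_a$ and $B=W_b$ in the paper's notation, the Lagrange-type identity you derive for $A$ is exactly the one the paper states for $W_a$, and your endpoint argument (differentiate~\eqref{eqnIsoInt} in $z$ to get $\dot m_a,\dot m_a'\to 0$ at $a$, then use the proportionality $\phi_b=(-1)^{\vartheta_\lambda}c_\lambda\phi_a$ to control the other factor) is the same mechanism the paper uses, only the paper rewrites $W_a(\lambda,x)$ explicitly as $(-1)^{\vartheta_\lambda}c_\lambda(x-a)^2(\dot m_a m_a'-\dot m_a' m_a)$ before taking the limit. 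Your final step, inferring $A(\lambda,b^-)=\dot W(\lambda)$ from the constancy of $A+B$ and $B(\lambda,b^-)=0$, is a slight repackaging of the paper's combination $-\dot W(\lambda)=W_a(\lambda,x)-W_b(\lambda,x)$, but the content is identical.
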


\begin{proof}
We set
\begin{align}\label{eqnWpm}
 W_a(z,x) = \dot{\phi}_a(z,x) \phi_b'(z,x) - \dot{\phi}'_a(z,x)\phi_b(z,x), \quad x\in(a,b),~z\in\C,  
\end{align}
where the spatial differentiation is done first. Now, since $\phi_a(z,\cdot\,)$ and $\phi_b(z,\cdot\,)$ satisfy~\eqref{eqnString} one gets
\begin{align*}
 W_a(z,\beta) - W_a(z,\alpha) = \int_\alpha^\beta \phi_a(z,s) \phi_b(z,s)d\omega(s), \quad \alpha,\,\beta\in(a,b).
\end{align*}
More precisely, this follows by differentiating~\eqref{eqnWpm} with respect to the spatial variable, where the derivative is in general a Borel measure.
Now differentiating the integral equation~\eqref{eqnIsoInt} and~\eqref{eqnIntEqnDer} with respect to the spectral variable we get
\begin{align*}
 (I+zK_a) \dot{m}_a(z,\cdot\,) = - K_a m_a(z,\cdot\,), \quad z\in\C,
\end{align*}
as well as
\begin{align*}
 \dot{m}_a'(z,x) =  -\int_a^{x} \biggl(\frac{s-a}{x-a}\biggr)^2  \left(m_a(z,s)+z\dot{m}_a(z,s)\right) d\omega(s), \quad x\in(a,b),~z\in\C.
\end{align*}
 In particular this shows that $\dot{m}_a(z,x) \rightarrow 0$ and $\dot{m}_a'(z,x) \rightarrow 0$ as $x\rightarrow a$ for each $z\in\C$. If $\lambda\in\sigma(S)$ is an eigenvalue, then we furthermore know
\begin{align*}
 (-1)^{\vartheta_\lambda} c_\lambda (x-a) m_a(\lambda,x) = (-1)^{\vartheta_\lambda} c_\lambda \phi_a(\lambda,x) =  \phi_b(\lambda,x), \quad x\in(a,b).
\end{align*}
 Hence after a simple calculation we see that   
\begin{align*}
 W_a(\lambda,x) = (-1)^{\vartheta_\lambda} c_\lambda (x-a)^2 \left( \dot{m}_a(\lambda,x) m_a'(\lambda,x) - \dot{m}_a'(\lambda,x) m_a(\lambda,x)\right),
\end{align*}
  tends to zero as $x\rightarrow a$, which shows  
\begin{align*}
 W_a(\lambda,x) = \int_a^{x} \phi_a(\lambda,s) \phi_b(\lambda,s) d\omega(s), \quad x\in(a,b).
\end{align*}
 Finally, since a similar equality holds for the function
\begin{align*}
 W_b(z,x) = \dot{\phi}_b(z,x) \phi_a'(z,x) - \dot{\phi}'_b(z,x)\phi_a(z,x), \quad x\in(a,b),~z\in\C, 
\end{align*}
 we end up with
\begin{align*}
 - \dot{W}(\lambda) = W_a(\lambda,x) - W_b(\lambda,x) =  \int_a^b \phi_a(\lambda,s)\phi_b(\lambda,s)d\omega(s), \quad x\in(a,b),
\end{align*}
which is the claimed identity.
\end{proof}

Since the eigenfunctions $\phi_a(\lambda,\cdot\,)$, $\lambda\in\sigma(S)$ form a complete orthonormal system, the transformation
\begin{align*}
 \mathcal{F}f(\lambda) = \int_a^b f(x)\phi_a(\lambda,x)d\omega(x), \quad \lambda\in\sigma(S),~f\in\Lrom, 
\end{align*}
 is unitary from $\Lrom$ onto $L^2(\R;\rho)$, where the discrete measure 
 \begin{align*}
  \rho = \sum_{\lambda\in\sigma(S)} \gamma_\lambda^{-2} \delta_\lambda
 \end{align*}
 is referred to as the spectral measure associated with $S$. Here $\delta_\lambda$ is the Dirac measure in the point $\lambda\in\sigma(S)$. 
 It is not hard to show that this transformation maps $S$ onto multiplication with the independent variable in $L^2(\R;\rho)$.  
 In particular, if $z\not\in\sigma(S)$ we have
 \begin{align*}
  \int_a^b (S-z)^{-1}f(x) g(x)^\ast d\omega(x) = \int_\R \frac{\mathcal{F}f(\lambda) \mathcal{F}g(\lambda)^\ast}{\lambda-z} d\rho(\lambda)
 \end{align*}
 for all $f$, $g\in\Lrom$. Hereby note that the resolvent is given by
\begin{align*}
 (S-z)^{-1} f(x) = \int_a^b G(z,x,s) f(s) d\omega(s), \quad x\in(a,b),~ f\in\Lrom,
\end{align*}
where $G$ is the Green function
\begin{align*}
 G(z,x,y) = W(z)^{-1} \begin{cases}
             \phi_a(z,x) \phi_b(z,y), & y\in [x,b), \\
             \phi_a(z,y) \phi_b(z,x), & y\in (a,x).
            \end{cases}
\end{align*}
 In fact, this follows from~\cite[Theorem~8.3]{measureSL} since $\phi_a(z,\cdot\,)$ satisfies the boundary condition near $a$ and $\phi_b(z,\cdot\,)$ the one near $b$. The following proposition shows that our spectral measure actually lies in the class $\SP$. 
 
\begin{proposition}\label{propInverse}
The inverse of $S$ is a trace class operator with
\begin{align} \label{eqnTrace}
  \sum_{\lambda\in\sigma(S)} \frac{1}{\lambda} = \int_a^b  \frac{(b-x)(x-a)}{b-a} d\omega(x).
\end{align}
\end{proposition}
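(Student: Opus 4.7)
The plan is to exhibit $S^{-1}$ as $KK^\ast$ for an explicit auxiliary operator $K$ whose Hilbert--Schmidt norm can be read off directly; this simultaneously yields trace-class membership and the integral formula. I begin by specializing the Green function formula from the paragraph preceding the proposition to $z=0$. Using the explicit solutions $\phi_a(0,x)=x-a$ and $\phi_b(0,x)=b-x$, which give $W(0)=b-a$, one obtains
\begin{align*}
G(0,x,y) = \frac{1}{b-a}\bigl(\min(x,y)-a\bigr)\bigl(b-\max(x,y)\bigr),
\end{align*}
so in particular $G(0,x,x) = (x-a)(b-x)/(b-a)$ is precisely the integrand appearing in~\eqref{eqnTrace}.

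Next I factor this kernel through Lebesgue measure on $(a,b)$. Setting
\begin{align*}
H(x,t) = \indik_{(a,x)}(t) - \frac{x-a}{b-a}, \quad x,t\in(a,b),
\end{align*}
a short direct computation yields $\int_a^b H(x,t)H(y,t)\,dt = G(0,x,y)$. Let $K\colon L^2((a,b);dt) \to \Lrom$ be the integral operator with kernel $H$. Fubini's theorem together with the Green function representation of $S^{-1}$ from the preceding paragraph then produces the operator identity $S^{-1} = KK^\ast$.

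The Hilbert--Schmidt norm of $K$ is
\begin{align*}
\|K\|_{\mathrm{HS}}^2 = \int_a^b \int_a^b H(x,t)^2 \, dt \, d\omega(x) = \int_a^b \frac{(b-x)(x-a)}{b-a}\, d\omega(x),
\end{align*}
which is finite by the very definition of $\M$. Hence $K$ is Hilbert--Schmidt, so $S^{-1}=KK^\ast$ is trace class with $\tr S^{-1} = \|K\|_{\mathrm{HS}}^2$ equal to the right-hand side of~\eqref{eqnTrace}. Combined with the earlier observation (from the paragraph preceding Lemma~\ref{lemWderlam}) that the eigenvalues of $S^{-1}$ are exactly the simple positive numbers $\{1/\lambda : \lambda\in\sigma(S)\}$, so that $\tr S^{-1} = \sum_{\lambda\in\sigma(S)} 1/\lambda$, this establishes~\eqref{eqnTrace}. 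The only genuinely non-routine step is verifying the kernel factorization and the resulting operator identity $KK^\ast = S^{-1}$; everything else reduces to Fubini and the integrability condition defining $\M$.
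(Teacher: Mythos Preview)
Your argument is correct. The kernel identity $\int_a^b H(x,t)H(y,t)\,dt = G(0,x,y)$ checks out by direct computation (for $x\le y$ one obtains $(x-a)(b-y)/(b-a)$), and the Hilbert--Schmidt bound together with $\tr(KK^\ast)=\|K\|_{\mathrm{HS}}^2$ then gives both trace-class membership and the trace formula. The Fubini step is justified because $\|H(x,\cdot)\|_{L^2(dt)}^2=G(0,x,x)$ is $\omega$-integrable by the definition of $\M$, so Cauchy--Schwarz controls the relevant double integrals.

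The paper's proof is different and considerably shorter: it simply writes down the explicit Green kernel of $S^{-1}$ and invokes Mercer's theorem for positive integral operators (citing \cite{gokr}, \cite{koe}) to read off the trace as the integral of the diagonal. Your route trades that black-box appeal for an explicit Hilbert--Schmidt factorization $S^{-1}=KK^\ast$ through $L^2((a,b);dt)$. What you gain is self-containment: Mercer's theorem in the generality needed here (continuous kernel on an open interval, arbitrary Borel weight $\omega$ that may be singular and infinite near the endpoints) is not entirely standard, whereas your computation uses only the definition of the Hilbert--Schmidt norm and the elementary identity $\tr(KK^\ast)=\|K\|_{\mathrm{HS}}^2$. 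The cost is the extra verification of the factorization identity for $H$, which is routine but not written out in the paper.
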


\begin{proof}
 Since the solutions $\phi_a(0,\cdot\,)$ and $\phi_b(0,\cdot\,)$ are linearly independent, $S$ is invertible with
 \begin{align*}
  S^{-1} f(x) = \int_a^b \frac{(b-\max(x,s))(\min(x,s)-a)}{b-a} f(s) d\omega(s), \quad x\in(a,b),
 \end{align*}
 for every $f\in\Lrom$. Moreover, since the spectrum of $S$ is positive, we infer from Mercer's theorem (see e.g.\ \cite[\S10.1]{gokr} or \cite[Theorem~3.a.1]{koe}) that $S^{-1}$ is a trace class operator with trace given as in the claim. 
\end{proof}

Next consider the self-adjoint operator $S_a$ in $L^2((a,c);\omega)$ associated with~\eqref{eqnString} and Dirichlet boundary conditions at $a$ (if necessary) and $c$. For similar reasons as above, the spectrum of this operator is strictly positive and consists of all zeros of the entire function $\phi_a(\,\cdot\,,c)$. 
 Furthermore, we introduce the self-adjoint operator $S_a'$ in $L^2((a,c);\omega)$ associated with~\eqref{eqnString}, Dirichlet boundary conditions at $a$ (if necessary) and Neumann boundary conditions at $c$. 
 Again, the spectrum of $S_a'$ is positive and consists precisely of the zeros of the entire function $\phi_a'(\,\cdot\,,c)$. 

\begin{theorem}\label{thmPHIrep}
 The entire functions $\phi_a(\,\cdot\,,c)$ and $\phi_a'(\,\cdot\,,c)$ are of exponential type zero and given by 
\begin{align*}
 \phi_a(z,c) = (c-a) \prod_{\mu_a\in \sigma(S_a)} \biggl(1-\frac{z}{\mu_a}\biggr) \quad\text{and}\quad  \phi_a'(z,c) = \prod_{\nu_a\in \sigma(S_a')} \biggl(1-\frac{z}{\nu_a}\biggr) 
\end{align*}
for each $z\in\C$.
\end{theorem}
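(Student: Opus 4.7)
The plan is to apply Hadamard's factorization theorem in genus zero, determine the linear term in the exponent via a trace identity, and then observe that the resulting canonical product automatically has exponential type zero.

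By Theorem~\ref{thmPHI}, both $\phi_a(\,\cdot\,,c)$ and $\phi_a'(\,\cdot\,,c)$ are real entire of finite exponential type, hence of order at most one. Their zero sets, $\sigma(S_a)$ and $\sigma(S_a')$, are discrete subsets of $\R^+$. Applying Proposition~\ref{propInverse} to $S_a$ on $L^2((a,c);\omega)$ (legitimate because $\int_a^c(c-x)(x-a)d\omega(x)\le\int_a^b(b-x)(x-a)d\omega(x)<\infty$) gives $\sum 1/\mu_a = \int_a^c(c-x)(x-a)/(c-a)\,d\omega(x)$. A direct computation of the Green's function of $S_a'$ from the Dirichlet solution $x-a$ and the Neumann solution $1$, whose Wronskian equals $-1$, yields $G_{S_a'}(x,x)=x-a$ and hence $\sum 1/\nu_a = \int_a^c(x-a)d\omega(x)$, which is finite because $(x-a)\le(b-x)(x-a)/(b-c)$ on $(a,c)$. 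Both zero sets therefore have genus zero, so Hadamard's theorem, normalized by $\phi_a(0,c)=c-a$ and $\phi_a'(0,c)=1$, yields real constants $\alpha,\alpha'$ with
\[
\phi_a(z,c)=(c-a)e^{\alpha z}\prod_{\mu_a\in\sigma(S_a)}\biggl(1-\frac{z}{\mu_a}\biggr), \qquad \phi_a'(z,c)=e^{\alpha' z}\prod_{\nu_a\in\sigma(S_a')}\biggl(1-\frac{z}{\nu_a}\biggr).
\]

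To force $\alpha=0$, I would differentiate the integral equation $\phi_a(z,c) = (c-a) - z\int_a^c(c-s)\phi_a(z,s)d\omega(s)$ at $z=0$ to get $\dot\phi_a(0,c) = -\int_a^c(c-s)(s-a)d\omega(s)$; dividing by $c-a$ produces exactly the trace $-\sum 1/\mu_a$, matching the logarithmic derivative $\alpha-\sum 1/\mu_a$ of the product at zero. The same method applied to~\eqref{eqnIsoInt} and~\eqref{eqnIntEqnDer}, together with $\phi_a'(z,x)=m_a(z,x)+(x-a)m_a'(z,x)$, collapses (after combining the two contributions using $(x-s)+(s-a)=x-a$) to $\dot\phi_a'(0,c)=-\int_a^c(x-a)d\omega(x) = -\sum 1/\nu_a$, forcing $\alpha'=0$ in the same way.

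For exponential type zero, it suffices to show that a canonical product $P(z)=\prod(1-z/\mu_n)$ with $\mu_n>0$ and $\sum 1/\mu_n<\infty$ has minimal type: given $\epsilon>0$, choose $N$ with $\sum_{n>N}1/\mu_n<\epsilon$ and split $P$ into a polynomial prefactor (which grows as $o(e^{\epsilon|z|})$) and a tail bounded by $\exp(|z|\sum_{n>N}1/\mu_n)\le e^{\epsilon|z|}$; arbitrariness of $\epsilon$ gives type zero. The main obstacle I foresee is identifying $\alpha=\alpha'=0$: the Neumann series estimate from the proof of Theorem~\ref{thmPHI} only gives a finite (in general nonzero) bound on the exponential type of $\phi_a(\,\cdot\,,c)$, so without the trace identity one cannot rule out a nontrivial exponential factor $e^{\alpha z}$.
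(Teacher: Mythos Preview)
Your proof is correct and follows essentially the same route as the paper: establish summability of the zeros via a trace formula, apply Hadamard's factorization in genus zero, and eliminate the residual exponential factor by computing $\dot\phi_a(0,c)$ and $\dot\phi_a'(0,c)$ from the integral equations and matching against the logarithmic derivative of the product. The paper carries this out explicitly for $\phi_a'(\,\cdot\,,c)$ and then remarks that $\phi_a(\,\cdot\,,c)$ is handled ``in much the same manner,'' whereas you spell out both cases and the type-zero estimate for the canonical product.
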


\begin{proof}
The inverse of $S_a'$ is given by 
\begin{align*}
 S_a'^{-1} f(x) = \int_a^c (\min(x,s)-a)f(s)d\omega(s), \quad x\in(a,c),~f\in L^2((a,c);\omega).
\end{align*}
As in the proof of Proposition~\ref{propInverse} one infers that this inverse is trace class with 
\begin{align*}
 \sum_{\nu_a\in \sigma(S_a')} \frac{1}{\nu_a} = \int_{a}^c (s-a) d\omega(s).
\end{align*}
 Moreover, since the real entire function $\phi_a'(\,\cdot\,,c)$ is of finite exponential type with summable zeros, the Hadamard factorization shows that 
\begin{align}\label{eqnPHIreppre}
 \phi_a'(z,c) = \E^{B_a' z} \prod_{\nu_a\in\sigma(S_a')} \biggl( 1-\frac{z}{\nu_a}\biggr), \quad z\in\C,
\end{align}
for some $B_a'\in\R$.
 From the integral equations for the function $m_a$ and its derivative as well as the representation~\eqref{eqnPHIreppre} on the other side we get
\begin{align*}
- \int_a^c (s-a)d\omega(s) = \dot{\phi}_a'(0,c) = B_a' - \sum_{\nu_a\in\sigma(S_a')}\frac{1}{\nu_a},
\end{align*}
 which yields $B_a'=0$.  Hence $\phi_a'(\,\cdot\,,c)$ has the claimed representation and therefore is of exponential type zero. 
 In much the same manner one shows the claimed properties for $\phi_a(\,\cdot\,,c)$. 
\end{proof}

 Of course, we may as well introduce the self-adjoint operator $S_b$ in $L^2([c,b);\omega)$ associated with~\eqref{eqnString} and Dirichlet boundary conditions at $c$ and $b$ (if necessary). 
 To be more precise, note that this operator is actually multi-valued if and only if $\omega$ has mass in the point $c$ (see \cite[Corollary~7.4]{measureSL} for details). 
 Nevertheless, the spectrum of $S_b$ is positive and consists precisely of all zeros of the entire function $\phi_b(\,\cdot\,,c)$. 
 Similarly, we denote with $S_b'$ the self-adjoint operator in $L^2([c,b);\omega)$ associated with~\eqref{eqnString}, Neumann boundary conditions at $c$ and Dirichlet boundary conditions at $b$ (if necessary). Again, the spectrum of this operator is positive and consists of all zeros of $\phi_b'(\,\cdot\,,c)$.
 Now essentially by reflection we infer that the entire functions $\phi_b(\,\cdot\,,c)$ and $\phi_b'(\,\cdot\,,c)$ are of exponential type zero as well and given by 
\begin{align*}
 \phi_b(z,c) = (b-c)\prod_{\mu_b\in \sigma(S_b)} \biggl( 1-\frac{z}{\mu_b}\biggr) \quad\text{and}\quad \phi_b'(z,c) = -\prod_{\nu_b\in\sigma(S_b')} \biggl( 1-\frac{z}{\nu_b}\biggr)
\end{align*}
 for each $z\in\C$. 
 As a consequence, we furthermore obtain the product representation
 \begin{align*}
 W(z) = (b-a) \prod_{\lambda\in\sigma(S)} \biggl(1-\frac{z}{\lambda}\biggr), \quad z\in\C,
\end{align*}
 since $W$ is of exponential type zero with summable zeros and $W(0)=b-a$. 
 
 The spectra $\sigma(S)$, $\sigma(S_a)$, $\sigma(S_b)$ are referred to as the three (Dirichlet) spectra associated with $\omega$ (and the interior point $c$). 
 Since for each eigenvalue $\lambda$ of $S$ the solutions $\phi_a(\lambda,\cdot\,)$ and $\phi_b(\lambda,\cdot\,)$ are linearly dependent, this eigenvalue lies in $\sigma(S_a)$ if and only if it lies in $\sigma(S_b)$.   
 Moreover, the following proposition shows that the triple $(\sigma(S), \sigma(S_a), \sigma(S_b))$ of discrete sets actually belongs to $\T$.
 
 \begin{proposition}
  $G(\,\cdot\,,c,c)$ is a meromorphic Herglotz--Nevanlinna function.
 \end{proposition}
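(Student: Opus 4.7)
My plan is to derive the spectral expansion
\begin{align*}
 G(z,c,c) = \sum_{\lambda\in\sigma(S)} \frac{\phi_a(\lambda,c)^2}{\gamma_\lambda^2(\lambda-z)}, \quad z\in\C\setminus\sigma(S),
\end{align*}
after which the Herglotz--Nevanlinna conclusion is immediate: each term is a non-negative multiple of the Herglotz--Nevanlinna function $z\mapsto 1/(\lambda-z)$, and uniformly convergent sums of Herglotz--Nevanlinna functions are themselves Herglotz--Nevanlinna.

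The starting point is Mercer's theorem (as already invoked in Proposition~\ref{propInverse}) applied to the non-negative continuous kernel
\begin{align*}
 K(x,y) = \frac{(b-\max(x,y))(\min(x,y)-a)}{b-a}
\end{align*}
of the positive trace class operator $S^{-1}$. It furnishes the uniformly convergent eigenfunction expansion $K(x,y) = \sum_\lambda \phi_a(\lambda,x)\phi_a(\lambda,y)/(\gamma_\lambda^2\lambda)$; specializing to $x=y=c$ gives the finite sum $\sum_\lambda \phi_a(\lambda,c)^2/(\gamma_\lambda^2\lambda) = (c-a)(b-c)/(b-a)$, which will serve as a dominating majorant for the target series via $|1/(\lambda-z)| \leq C_z/\lambda$ on compact subsets of $\C\setminus\sigma(S)$.

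To promote this to the full expansion for $G(z,x,y)$, I would insert Mercer's formula into the resolvent identity $(S-z)^{-1} = S^{-1} + z(S-z)^{-1}S^{-1}$ at the integral-kernel level (the interchange of sum and integral being justified by the uniform convergence of Mercer's expansion), and then apply the eigenfunction identity $\int_a^b G(z,x,s)\phi_a(\lambda,s)\,d\omega(s) = (\lambda-z)^{-1}\phi_a(\lambda,x)$ termwise. A brief algebraic manipulation combining $1/\lambda + z/(\lambda(\lambda-z)) = 1/(\lambda-z)$ condenses the two contributions into
\begin{align*}
 G(z,x,y) = \sum_\lambda \frac{\phi_a(\lambda,x)\phi_a(\lambda,y)}{\gamma_\lambda^2(\lambda-z)},
\end{align*}
and setting $x=y=c$ yields the claimed expansion. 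As a consistency check, Lemma~\ref{lemWderlam} together with the coupling relation $\phi_b(\lambda,\,\cdot\,) = (-1)^{\vartheta_\lambda}c_\lambda\phi_a(\lambda,\,\cdot\,)$ shows that the residue of $\phi_a(z,c)\phi_b(z,c)/W(z)$ at $\lambda$ equals $-\phi_a(\lambda,c)^2/\gamma_\lambda^2$, matching the principal part of the series. The main technical obstacle is precisely this termwise manipulation inside the resolvent identity; all of it rests on the uniform convergence supplied by Mercer's theorem applied to the continuous kernel of the positive trace class operator $S^{-1}$.
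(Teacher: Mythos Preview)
Your argument is correct and delivers more than asked: the full partial-fraction expansion of $G(\,\cdot\,,c,c)$. It is, however, a genuinely different route from the paper's. The paper simply records the Lagrange-identity computation
\begin{align*}
 \im\, G(z,c,c) = \im(z) \int_a^b |G(z,c,s)|^2\,d\omega(s), \quad z\in\C\backslash\R,
\end{align*}
and reads off the Herglotz--Nevanlinna property in one line. Your approach buys the explicit spectral representation (essentially the content the paper only develops later, in the proof of Lemma~\ref{lemStrongConv}), at the cost of assembling Mercer, the kernel-level resolvent identity, and a termwise interchange.

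Two of your technical justifications deserve more care than you give them. First, the references cited for Mercer in Proposition~\ref{propInverse} treat compact spaces with finite measure, whereas here $\omega$ may be infinite on $(a,b)$; uniform convergence of the eigenfunction series on all of $(a,b)$ does hold, but you should say why (e.g.\ via Dini on $[a,b]$, after noting that the non-negative diagonal remainders are dominated by $K(x,x)=(b-x)(x-a)/(b-a)$ and hence extend continuously to the endpoints with value zero). A cleaner alternative is to bypass uniform convergence altogether: both $G(z,c,\cdot)$ and $K(\cdot,c)$ lie in $\Lrom$ with Fourier coefficients $\phi_a(\lambda,c)/(\lambda-z)$ and $\phi_a(\lambda,c)/\lambda$, so Parseval gives the interchange directly. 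Second, the kernel-level identity $G(z,c,c)=K(c,c)+z\int_a^b G(z,c,s)K(s,c)\,d\omega(s)$ is true but not stated anywhere in the paper; its most direct verification is again a Lagrange-identity calculation (exactly the one carried out in the proof of Lemma~\ref{lemStrongConv}), so the machinery you invoke ultimately rests on the same identity the paper exploits in its one-line proof.
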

 
 \begin{proof}
 A calculation using the Lagrange identity shows that 
 \begin{align*}
  \im\, G(z,c,c) = \im\, z \int_a^b |G(z,c,s)|^2 d\omega(s), \quad z\in\C\backslash\R,
 \end{align*}
  which proves the claim. 
\end{proof}
 
 Finally, note that the norming constant $\gamma_\lambda^2$ for some eigenvalue $\lambda\in\sigma(S)$ is uniquely determined by the three spectra unless $\phi_a(\lambda,c)=\phi_b(\lambda,c) = 0$. In fact, in this case it can be written down explicitly in terms of these spectra in view of 
\begin{align}\label{eqngamma}
 \gamma_{\lambda}^2 =  - \dot{W}(\lambda) \phi_a(\lambda,c) \phi_b(\lambda,c)^{-1}. 
\end{align}
 Hence one sees that the spectral measure may be recovered from the three spectra 
 and the collection of coupling constants $c_\lambda$, $\lambda\in\sigma(S)\cap\sigma(S_a)\cap\sigma(S_b)$.

\section{Continuity of the spectral quantities}\label{secCont}

For the solution of our inverse problems, we need some continuity of the spectral quantities. 
 Therefore we equip $\M$ with the initial topology with respect to the linear functionals
 \begin{align*}
  \omega \mapsto \int_a^b f(x) (b-x)(x-a)d\omega(x), \quad f\in C_0(a,b),
 \end{align*}
 on $\M$, where $C_0(a,b)$ is the set of all continuous functions on $(a,b)$ which vanish in $a$ and $b$. 
 Note that this is the weak$^\ast$ topology upon identifying $\M$ with a (weak$^\ast$ closed) subset of the dual of $C_0(a,b)$. 
 In fact, each measure $\omega\in\M$ may be regarded as the bounded linear functional 
 \begin{align*}
  f \mapsto \int_a^b f(x)(b-x)(x-a)d\omega(x)
 \end{align*}
 on $C_0(a,b)$. 
 As a consequence, our topology is metrizable on bounded (with respect to the operator norm of the corresponding functionals) subsets of $\M$ and convergent sequences in $\M$ are bounded. 
 In order to investigate continuous dependence of the spectral quantities, consider some sequence $\omega_n\in\M$, $n\in\N$. 
 We write $\omega_n\rightharpoonup^\ast\omega$ if this sequence converges to $\omega$ as $n\rightarrow\infty$ with respect to the weak$^\ast$ topology.
 All quantities corresponding to the measures $\omega$ and $\omega_n$, $n\in\N$ are denoted as in the preceding section but with an additional subscript $n\in\N$ for those of $\omega_n$. 
  
  \begin{lemma}\label{lemStrongConv}
   If $\omega_n\rightharpoonup^{\ast}\omega$, then $G_n(\,\cdot\,,c,c)\rightarrow G(\,\cdot\,,c,c)$ locally uniformly on $\C\backslash\R$. 
  \end{lemma}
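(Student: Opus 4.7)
The plan is to combine a Neumann series expansion of $G_n(z, c, c)$ around $z = 0$ with a normal families argument to extend the convergence to all of $\C \backslash \R$.

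First I would iterate the resolvent identity $(S_n - z)^{-1} = S_n^{-1} + z S_n^{-1} (S_n - z)^{-1}$ in its kernel form to obtain, for sufficiently small $|z|$, the convergent expansion
\begin{align*}
 G_n(z, c, c) = \sum_{k=0}^\infty z^k \int_{(a,b)^k} \prod_{j=0}^{k} G(0, s_j, s_{j+1})\, d\omega_n^{\otimes k}(s_1, \ldots, s_k),
\end{align*}
with the convention $s_0 = s_{k+1} = c$; the crucial observation is that $G_n(0, x, y) = (b - \max(x, y))(\min(x, y) - a)/(b - a)$ is independent of the mass distribution and hence can be written $G(0, x, y)$. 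Using the pointwise bound $G(0, x, y) \leq \sqrt{(b-x)(x-a)}\sqrt{(b-y)(y-a)}/(b-a)$ together with the fact that convergent sequences in $\M$ are bounded (so that $C_\ast := \sup_n \int_a^b (b-s)(s-a)\,d\omega_n(s)$ is finite), each integral is controlled by $(b-c)(c-a) C_\ast^k/(b-a)^{k+1}$, and the series converges absolutely and uniformly in $n$ on the disk $\lbrace z \in \C : |z| C_\ast < b - a \rbrace$.

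Next I would establish termwise convergence of the coefficients. The integrand $H_k(s_1, \ldots, s_k) = \prod_{j=0}^{k} G(0, s_j, s_{j+1})$ acquires a factor $(s_j - a)^2$ as $s_j \to a$ and a factor $(b - s_j)^2$ as $s_j \to b$, so the quotient $f_k := H_k/\prod_{j=1}^{k}(b-s_j)(s_j-a)$ extends continuously to an element of $C_0((a,b)^k)$. Since the topology on $\M$ is precisely weak$^\ast$ convergence of the measures $(b-s)(s-a)\,d\omega_n(s)$ against $C_0(a,b)$, and since this property passes to product measures on $(a,b)^k$ (by density of the algebraic tensor product of $k$ copies of $C_0(a,b)$ in $C_0((a,b)^k)$ combined with the uniform bound on total masses), one obtains $\int H_k\, d\omega_n^{\otimes k} \to \int H_k\, d\omega^{\otimes k}$ for each $k$. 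In conjunction with the uniform estimate of the previous step, this yields $G_n(z, c, c) \to G(z, c, c)$ uniformly on compact subsets of a fixed disk around the origin.

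It remains to extend the convergence to all of $\C \backslash \R$. By the proposition preceding this section each $G_n(\cdot, c, c)$ is a Herglotz--Nevanlinna function, and therefore maps the upper half-plane into its closure; Montel's theorem then implies that $\lbrace G_n(\cdot, c, c)\rbrace$ is a normal family on each half-plane. Any subsequence admits a sub-subsequence converging locally uniformly on $\C \backslash \R$ to some holomorphic limit, which coincides with $G(\cdot, c, c)$ on each component of the intersection of the small disk with $\C\backslash\R$ and hence, by the identity principle on the connected components of $\C\backslash\R$, everywhere. Since every sub-subsequential limit equals $G(\cdot, c, c)$, the full sequence converges as claimed. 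The main point requiring care is the verification that $f_k$ genuinely lies in $C_0((a,b)^k)$ and that weak$^\ast$ convergence on $\M$ is compatible with product measures; both are conceptually routine but must be checked carefully in view of the piecewise definition of the free Green function.
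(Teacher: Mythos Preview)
Your argument is correct and takes a genuinely different route from the paper.  The paper never expands $G_n(z,c,c)$ in a Neumann series; instead it passes to the Sobolev space $H_0^1(a,b)$, defines an auxiliary operator $R_n g(x)=\int_a^b G(0,x,s)g(s)\,d\omega_n(s)$ there, shows that $R_n\to R$ in the strong operator topology (weak convergence is immediate from the hypothesis, and the upgrade to strong convergence uses Arzel\`a--Ascoli together with the identity $\|R_n g\|_{H_0^1}^2=\spr{R_n g}{g}_{L^2(\omega_n)}$), and then invokes the general fact that strong convergence of $R_n$ implies strong convergence of $(R_n^{-1}-z)^{-1}$.  A short computation with the spectral theorem identifies $\spr{(R^{-1}-z)^{-1}\delta_c}{\delta_c}_{H_0^1}$ with $z^{-1}G(z,c,c)$ up to an explicit constant.

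Your approach trades this operator-theoretic machinery for the combination of a power-series expansion near $z=0$ and a complex-analytic globalization.  The key bound $G(0,x,y)\le \sqrt{(b-x)(x-a)(b-y)(y-a)}/(b-a)$ gives the uniform estimate $(b-c)(c-a)C_\ast^k/(b-a)^{k+1}$ on the $k$-th coefficient, so the series converges on the common disk $|z|<(b-a)/C_\ast$ (which also lies below all spectra since $\lambda_1(S_n)\ge (b-a)/C_\ast$ by the trace formula).  Termwise convergence then reduces to weak$^\ast$ convergence of product measures against $C_0((a,b)^k)$, and the Herglotz--Nevanlinna property from Section~2 supplies the normality needed to propagate the convergence to all of $\C\backslash\R$.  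This is more elementary in that it avoids the Sobolev space entirely, but it imports the Herglotz property as an extra ingredient and yields only the diagonal value $G_n(z,c,c)$, whereas the paper's proof actually establishes strong resolvent convergence in $H_0^1(a,b)$, a stronger conclusion that may be useful elsewhere.  The verification that $f_k\in C_0((a,b)^k)$ is indeed routine: the inequality $\min(s_j,s_{j+1})-a\le\sqrt{(s_j-a)(s_{j+1}-a)}$ shows that any cluster of variables approaching an endpoint contributes one more vanishing factor to the numerator than to the denominator.
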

  
  \begin{proof}
 Consider the Sobolev space $H_0^1(a,b)$ equipped with the definite inner product
 \begin{align*}
  \spr{f}{g}_{H_0^1(a,b)} = \int_a^b f'(x) g'(x)^\ast dx, \quad f,\,g\in H_0^1(a,b),
 \end{align*}
 and note that $H_0^1(a,b)\subseteq \Lrom$ because of the estimate
 \begin{align*}
  |f(x)|^2 \leq \|f\|_{H_0^1(a,b)}^2 \min(b-x,x-a), \quad x\in(a,b),~f\in H_0^1(a,b),
 \end{align*}
 which follows from a simple application of the Cauchy--Schwarz--Bunyakovsky inequality. 
 We introduce the integral operator $R$ on $H_0^1(a,b)$ given by 
 \begin{align*}
  R g(x) = \int_a^b G(0,x,s) g(s)d\omega(s), \quad x\in(a,b),~g\in H_0^1(a,b).
 \end{align*}
 Since $R$ acts like the inverse of $S$, the function $Rg$ is a solution of the differential equation $-Rg'' = g \omega$ on $(a,b)$,
  which is square integrable with respect to $\omega$ and satisfies the boundary conditions in~\eqref{eqnBC}. 
  Moreover, if $g$ has compact support, then this also guarantees $Rg\in H_0^1(a,b)$ (since $Rg'$ is bounded) and an integration by parts shows that 
 \begin{align}\label{eqnLeftDef}
  \spr{f}{Rg}_{H_0^1(a,b)} & = \int_a^b f'(x) Rg'(x)^\ast dx  = \int_a^b f(x) g(x)^\ast d\omega(x) \\
   \nonumber                     & \leq \|f\|_{H_0^1(a,b)} \|g\|_{H_0^1(a,b)} \int_a^b \min(b-x,x-a) d\omega(x)
 \end{align}
 for all $f\in H_0^1(a,b)$. 
 In particular, $R$ is bounded on the dense subspace of functions with compact support.
 Now since for each $x\in(a,b)$, $g\mapsto Rg(x)$ is continuous on $H_0^1(a,b)$, we infer that $R$ is a (well-defined) bounded operator on $H_0^1(a,b)$. Furthermore, by continuity, \eqref{eqnLeftDef} 
 holds for all $g\in H_0^1(a,b)$, which proves that $R$ is even self-adjoint. 
 Furthermore, since $R$ and $S^{-1}$ act the same way one has $\sigma(S^{-1})\backslash\lbrace 0\rbrace=\sigma(R)\backslash\lbrace0\rbrace$.
 In fact, it is readily verified that each non-zero eigenvalue of $S^{-1}$ is also an eigenvalue of $R$. For the converse note that for each function $g\in H_0^1(a,b)$ which is orthogonal to all eigenfunctions $\phi_a(\lambda,\cdot\,)$, $\lambda\in\sigma(S)$ in $H_0^1(a,b)$ we have $g=0$ in $L^2((a,b);\omega)$ in view of~\eqref{eqnLeftDef} and hence $Rg=0$.
 Finally let $\delta_c\in H_0^1(a,b)$ such that $\spr{f}{\delta_c}_{H_0^1(a,b)}=f(c)$ for all $f\in H_0^1(a,b)$. 
 Then, applying variants of the spectral theorem to the operators $R$ and $S$ (in particular, see \cite[Lemma~10.6]{measureSL}) yields for each fixed $z\in\C\backslash\R$ 
 \begin{align*}
  \spr{(R^{-1}-z)^{-1} \delta_c}{\delta_c}_{H_0^1(a,b)} & = \sum_{\lambda\in\sigma(S)} \frac{\phi_a(\lambda,c)}{\lambda-z} \frac{\phi_a(\lambda,c)}{\lambda} \gamma_\lambda^{-2} \\
   & = \spr{G(z,c,\cdot\,)}{G(0,c,\cdot\,)}_{L^2((a,b);\omega)} \\
   & = \frac{G(z,c,c)}{z} -  \frac{1}{z} \frac{(b-c)(c-a)}{b-a},
 \end{align*} 
 where the last equation follows from a direct calculation using the Lagrange identity.  
 
 We will now show that the corresponding operators $R_n$, $n\in\N$ converge to $R$ in the strong operator topology. 
  Therefore, first of all note that given some arbitrary $g\in H_0^1(a,b)$ we have 
 \begin{align*}
  R_n g(x) = \int_a^b G(0,x,s) g(s)d\omega_n(s) \rightarrow \int_a^b G(0,x,s) g(s)d\omega(s) = R g(x), \quad x\in(a,b)
 \end{align*}
 by assumption. Since all these operators are uniformly bounded by~\eqref{eqnTrace}, 
 this implies that $R_n$ converges to $R$ in the weak operator topology. 
 In order to prove that they also converge in the strong operator topology, note that~\eqref{eqnLeftDef} shows 
 \begin{align*}
  \|R_n g\|_{H_0^1(a,b)}^2 = \spr{R_n g}{g}_{L^2((a,b);\omega_n)}, \quad n\in\N.
 \end{align*}
 Moreover, since the functions $R_n g$ are uniformly bounded in $H_0^1(a,b)$, we infer from the Arzel\`{a}--Ascoli theorem that $R_n g$ converges to $R g$ locally uniformly.
 Now if we assume that $g$ has compact support, then we have
 \begin{align*}
  & \left| \int_a^b R_n g(x) g(x)^\ast d\omega_n(x) - \int_a^b Rg(x) g(x)^\ast d\omega(x) \right| \\
  & \qquad\qquad\qquad \leq  \omega_n(\supp(g)) \sup_{x\in\supp(g)} |g(x)| |R_n g(x)-R g(x)| \\
  & \qquad\qquad\qquad\quad + \left|\int_a^b R g(x)g(x)^\ast d\omega_n(x) - \int_a^b Rg(x)g(x)^\ast d\omega(x)\right|,
 \end{align*}
 and thus $R_n g$ converges to $Rg$ in $H_0^1(a,b)$. 
 Again, since our operators are uniformly bounded, we infer that $R_n$ converges to $R$ in the strong operator topology. 
 But now the claim follows since then $(R_n^{-1}-z)^{-1}$ converges to $(R^{-1}-z)^{-1}$ in the strong operator topology (see e.g.\ \cite[Lemma~6.36 and Theorem~6.31]{tschroe}) for each $z\in\C\backslash\R$.  
 \end{proof}
    
 In particular, the preceding lemma implies that each $\lambda\in\sigma(S)$ is the limit of some sequence $\lambda_n\in\sigma(S_n)$, $n\in\N$ if $\omega_n\rightharpoonup^\ast\omega$. 
  Also note that a similar statement holds for the two restricted operators $S_a$ and $S_b$.   
    
 For the sake of simplicity we will restrict our considerations to the set $\M_\sigma$ of all measures in $\omega\in\M$ whose spectrum (associated with the boundary value problem~\eqref{eqnString} and~\eqref{eqnBC}) is contained in a fixed discrete set $\sigma\subset\R^+$ with
 \begin{align*}
  \sum_{\lambda\in\sigma} \frac{1}{\lambda} < \infty.
 \end{align*} 
 Since this set is bounded by Proposition~\ref{propInverse} and closed (as a subset of the dual of $C_0(a,b)$) in view of Lemma~\ref{lemStrongConv}, it is compact by the Banach--Alaoglu theorem.  
  The next result contains some kind of continuity for the three spectra.
   
\begin{proposition}\label{propCont}
  Suppose that the measures $\omega_n$, $n\in\N$ lie in $\M_\sigma$ and $\omega_n\rightharpoonup^{\ast}\omega$. Then there is a subsequence $\omega_{n_k}$ and disjoint sets $\tau_a$, $\tau_b\subseteq\sigma\backslash\sigma(S)$ such that the Wronskians $W_{n_k}$ converge locally uniformly to the function given by 
 \begin{align}\label{eqnWinf}
   W(z) \prod_{\lambda\in\tau_a\cup\tau_b} \biggl(1-\frac{z}{\lambda}\biggr), \quad z\in\C,
 \end{align}
 and the solutions $\phi_{n_k,a}(\,\cdot\,,c)$ and $\phi_{n_k,b}(\,\cdot\,,c)$ converge locally uniformly to the functions  
 \begin{align}\label{eqnPHIinf}
  \phi_a(z,c) \prod_{\lambda\in\tau_a} \biggl(1-\frac{z}{\lambda}\biggr) \quad\text{and}\quad \phi_b(z,c) \prod_{\lambda\in\tau_b} \biggl(1-\frac{z}{\lambda}\biggr), \quad z\in\C,
 \end{align}
 respectively, as $k\rightarrow\infty$. 
 \end{proposition}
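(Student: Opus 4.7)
The plan is to extract subsequential limits of the three families via Montel's theorem and then identify them using the Green's function convergence from Lemma~\ref{lemStrongConv}.

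\textbf{Compactness.} Since $\sigma(S_n)\subseteq\sigma$ with $\sum_{\lambda\in\sigma}1/\lambda<\infty$, the product representation of Theorem~\ref{thmPHIrep} gives $|W_n(z)|\leq(b-a)e^{C|z|}$ uniformly on compacta. Differentiating the integral equation from the proof of Theorem~\ref{thmPHI} at $z=0$ yields $\sum_{\mu\in\sigma(S_{n,a})}1/\mu = \int_a^c(c-s)(s-a)/(c-a)\,d\omega_n(s)$, which is uniformly bounded because weak$^*$-convergent sequences in $\M$ are bounded; hence $\phi_{n,a}(\cdot,c)$ enjoys an analogous uniform bound on compacta, as does $\phi_{n,b}(\cdot,c)$. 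Montel's theorem then provides a subsequence along which $W_n,\phi_{n,a}(\cdot,c),\phi_{n,b}(\cdot,c)$ converge locally uniformly to entire functions $\tilde W,\tilde\phi_a,\tilde\phi_b$.

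\textbf{The function $\tilde W$.} A diagonal argument over the countable set $\sigma$ arranges that $\indik_{\{\lambda\in\sigma(S_n)\}}$ stabilizes for each $\lambda\in\sigma$. Hurwitz's theorem together with the discreteness of $\sigma$ identifies the zeros of $\tilde W$ as a subset $\sigma_\infty\subseteq\sigma$. The consequence of Lemma~\ref{lemStrongConv} noted right after its proof (each $\lambda\in\sigma(S)$ is a limit of eigenvalues of $S_n$, so by discreteness equals one for $n$ large) gives $\sigma(S)\subseteq\sigma_\infty$. Setting $\tau:=\sigma_\infty\setminus\sigma(S)\subseteq\sigma\setminus\sigma(S)$ and matching $\dot{\tilde W}(0)/\tilde W(0) = -\sum_{\lambda\in\sigma_\infty}1/\lambda$ (dominated convergence over $\sigma$) against the Hadamard factorization pins the exponential factor to be trivial, so $\tilde W(z) = W(z)\prod_{\lambda\in\tau}(1-z/\lambda)$.

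\textbf{The functions $\tilde\phi_a$ and $\tilde\phi_b$.} Lemma~\ref{lemStrongConv} together with $G_n(z,c,c) = \phi_{n,a}(z,c)\phi_{n,b}(z,c)/W_n(z)$ yields the entire-function identity $\tilde\phi_a(z)\tilde\phi_b(z)W(z) = \tilde W(z)\phi_a(z,c)\phi_b(z,c)$. The weak$^*$ convergence of $\omega_n$ on $(a,b)$ restricts to weak$^*$ convergence of $\omega_n|_{(a,c)}$ on $(a,c)$---the multiplier $(c-s)/(b-s)$ extended by zero past $c$ lies in $C_0(a,b)$---so applying Lemma~\ref{lemStrongConv} to the restricted problem gives $\sigma(S_a)\subseteq\{\text{zeros of }\tilde\phi_a\}$, and analogously for $\tilde\phi_b$. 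Hence $R_a:=\tilde\phi_a/\phi_a(\cdot,c)$ and $R_b:=\tilde\phi_b/\phi_b(\cdot,c)$ are entire, and running the same Hadamard exponential-factor argument as for $\tilde W$ (using $R_a(0)=R_b(0)=1$ and convergence of derivatives at $0$) shows they are of exponential type zero. The identity reduces to $R_a(z)R_b(z) = \prod_{\lambda\in\tau}(1-z/\lambda)$, a polynomial with simple zeros in $\tau$; since an entire function of exponential type zero whose product with another such is a polynomial must itself be a polynomial (the zero-free factor $e^g$ in its Hadamard representation is of exponential type zero, hence constant), $R_a$ and $R_b$ are polynomials. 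The normalizations $R_a(0)=R_b(0)=1$ then force $R_a(z) = \prod_{\lambda\in\tau_a}(1-z/\lambda)$ and $R_b(z) = \prod_{\lambda\in\tau_b}(1-z/\lambda)$ with $\tau_a,\tau_b$ disjoint subsets of $\tau$ whose union is $\tau$.

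\textbf{Main obstacle.} The technical heart of the argument is showing that $\tilde\phi_a,\tilde\phi_b$ are of exponential type zero (not merely bounded by $e^{C|z|}$) and that $R_a,R_b$ are entire. Both rest on successfully transferring the Green's-function machinery of Lemma~\ref{lemStrongConv} to the restricted intervals $(a,c)$ and $(c,b)$, for which the key observation is that the chosen weak$^*$ topology on $\M$ descends to the analogous topologies there. Without this restriction step, limit zeros of $\phi_{n,a}(\cdot,c)$ could be uncontrolled and the clean partition of $\tau$ into $\tau_a\sqcup\tau_b$ would not follow.
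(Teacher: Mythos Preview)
Your overall strategy---Montel for compactness, Hurwitz for the zeros, and Lemma~\ref{lemStrongConv} for the identity $\tilde\phi_a\tilde\phi_b/\tilde W=\phi_a(\cdot,c)\phi_b(\cdot,c)/W$---matches the paper's, including the restriction to $(a,c)$ to pin $\sigma(S_a)$ inside the zero set of $\tilde\phi_a$. The real difference is in the \emph{uniform bound} you start from. The paper bounds all three families by the single genus-zero product
\[
(b-a)\prod_{\lambda\in\sigma}\Bigl(1+\frac{|z|}{\lambda}\Bigr),
\]
which is of exponential type zero; this forces every subsequential limit to be a canonical product outright, so no exponential factor ever has to be killed. (The bound for $\phi_{n,a}(\cdot,c)$ comes from domain monotonicity: the $k$-th eigenvalue of $S_{n,a}$ dominates the $k$-th eigenvalue of $S_n$, which in turn dominates the $k$-th element of $\sigma$.) Your bound $e^{C|z|}$ is weaker and leaves you with a linear exponent $e^{B_az}$ in the Hadamard representation of $\tilde\phi_a$ that must be removed by hand.

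That removal is where your argument has a gap. For $\tilde W$ your ``derivative at $0$'' argument works because $\sigma(S_n)\subseteq\sigma$ lets you pass to the limit in $\sum_{\lambda\in\sigma(S_n)}1/\lambda$ by dominated convergence over the fixed summable set $\sigma$. For $\tilde\phi_a$ you try the ``same'' argument, but $\sigma(S_{n,a})$ is \emph{not} contained in any fixed summable set, so you have no independent way to compute $\lim_k\sum_{\mu\in\sigma(S_{n_k,a})}1/\mu$; the derivative identity you write down reduces to a tautology $B_a-\sum_{Z_a}1/\mu=-\lim_k\sum 1/\mu$ and does not by itself force $B_a=0$. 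The hole is repairable: a Fatou argument on the zeros gives $B_a\le 0$ and $B_b\le 0$, while $R_aR_b=\prod_{\lambda\in\tau}(1-z/\lambda)$ forces $B_a+B_b=0$, hence $B_a=B_b=0$. But as written the step is incomplete, and the cleaner fix is simply to use the sharper bound above from the outset.

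Two minor points: $R_aR_b=\prod_{\lambda\in\tau}(1-z/\lambda)$ is not a polynomial in general, since $\tau\subseteq\sigma\setminus\sigma(S)$ may well be infinite; and your parenthetical ``the multiplier $(c-s)/(b-s)$ extended by zero past $c$ lies in $C_0(a,b)$'' is false at $s=a$---what you actually need (and what is true) is that $f(s)(c-s)/(b-s)$ lies in $C_0(a,b)$ for $f\in C_0(a,c)$.
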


 \begin{proof}
  First of all note that for each $n\in\N$ the functions $W_n$, $\phi_{n,a}(\,\cdot\,,c)$ and $\phi_{n,b}(\,\cdot\,,c)$ are bounded by the product 
   \begin{align*}
     (b-a) \prod_{\lambda\in\sigma} \biggl(1+\frac{|z|}{\lambda}\biggr), \quad z\in\C.
   \end{align*} 
  Hence there is a subsequence $\omega_{n_k}$ such that the functions $W_{n_k}$, $\phi_{n_k,a}(\,\cdot\,,c)$ and $\phi_{n_k,b}(\,\cdot\,,c)$ converge locally uniformly to some entire functions of exponential type zero. Moreover, since the zeros of the functions $W_{n_k}$ are contained in $\sigma$, their limit has to be of the form  
  \begin{align*}
   \lim_{k\rightarrow\infty} W_{n_k}(z) = (b-a)\prod_{\lambda\in\sigma_\infty} \biggl(1-\frac{z}{\lambda}\biggr), \quad z\in\C,
  \end{align*}
  for some set $\sigma_\infty\subseteq\sigma$ which contains the spectrum of $S$ in view of Lemma~\ref{lemStrongConv}. 
   Similarly, the functions $\phi_{n_k,a}(\,\cdot\,,c)$ and $\phi_{n_k,b}(\,\cdot\,,c)$ converge to some canonical products which vanish in the points of $\sigma(S_a)$ and $\sigma(S_b)$ respectively, that is, to functions of the form~\eqref{eqnPHIinf}. 
 Now the convergence in Lemma~\ref{lemStrongConv} implies that the sets $\tau_a$ and $\tau_b$ of additional zeros of these limits form a partition of $\sigma_\infty\backslash\sigma(S)$ which proves the claim.  
 \end{proof}

 If we somewhat strengthen the topology on $\M_\sigma$, then we end up with a stronger convergence of the three spectra. In fact, if $\omega_n\rightharpoonup^\ast\omega$ and additionally 
 \begin{align} \label{eqnConvbast}
  \int_a^b (b-x)(x-a)d\omega_n(x) \rightarrow \int_a^b (b-x)(x-a) d\omega(x), 
 \end{align}
 then the spectra $\sigma(S_n)$ actually converge to $\sigma(S)$ (in the sense of pointwise convergence of the respective characteristic functions) and hence $\tau_a$, $\tau_b$ in Proposition~\ref{propCont} are indeed always empty sets.
 Hence the three functions $W_n$, $\phi_{n,a}(\,\cdot\,,c)$ and $\phi_{n,b}(\,\cdot\,,c)$ converge to $W$, $\phi_{a}(\,\cdot\,,c)$ and $\phi_b(\,\cdot\,,c)$, respectively.

 As a simple consequence of Proposition~\ref{propCont} we also obtain some kind of continuity for the norming constants and hence for the spectral measure.

\begin{proposition}\label{propConvSM} 
 Suppose that the measures $\omega_n$, $n\in\N$ lie in $\M_\sigma$ and $\omega_n\rightharpoonup^{\ast}\omega$. Then there is a subsequence $\omega_{n_k}$ and disjoint sets $\tau_a$, $\tau_b\subseteq\sigma\backslash\sigma(S)$ such that the spectra $\sigma(S_{n_k})$ converge to $\sigma(S)\cup\tau_a\cup\tau_b$ and  
  \begin{align}
   \lim_{k\rightarrow\infty} \gamma_{n_k,\lambda}^2 = \begin{cases} 0, & \lambda\in\tau_a, \\ \gamma_{\lambda}^2 \, \prod_{\kappa\in\tau_a} \left(1-\frac{\lambda}{\kappa}\right)^2, & \lambda\in\sigma(S), \\ \infty, & \lambda\in\tau_b.  \end{cases}
  \end{align}
\end{proposition}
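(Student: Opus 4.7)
The plan is to combine Proposition~\ref{propCont} with the identity \eqref{eqngamma} and then read off each of the three cases from the product representations \eqref{eqnWinf}--\eqref{eqnPHIinf}.

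First I would apply Proposition~\ref{propCont} to extract a subsequence $\omega_{n_k}$ together with disjoint sets $\tau_a,\tau_b\subseteq\sigma\setminus\sigma(S)$ such that $W_{n_k}$, $\phi_{n_k,a}(\,\cdot\,,c)$ and $\phi_{n_k,b}(\,\cdot\,,c)$ converge locally uniformly on $\C$ to the entire functions $W^\infty$, $\phi_a^\infty$, $\phi_b^\infty$ prescribed by \eqref{eqnWinf}--\eqref{eqnPHIinf}. By the Weierstrass theorem the convergence automatically transfers to the $z$-derivatives, and by Hurwitz's theorem the spectra $\sigma(S_{n_k})$ (i.e.\ the zero sets of $W_{n_k}$) converge, with multiplicities, to the zero set of $W^\infty$, which is precisely $\sigma(S)\cup\tau_a\cup\tau_b$. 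This settles the assertion about the convergence of spectra and produces a natural labelling $\lambda_k\in\sigma(S_{n_k})$ with $\lambda_k\to\lambda$ for each $\lambda$ in the limit set.

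Next I would evaluate the norming constants by means of identity \eqref{eqngamma} applied to $S_{n_k}$, namely
\begin{align*}
 \gamma_{n_k,\lambda_k}^{2} = -\dot W_{n_k}(\lambda_k)\,\frac{\phi_{n_k,a}(\lambda_k,c)}{\phi_{n_k,b}(\lambda_k,c)},
\end{align*}
which is valid as long as $\phi_{n_k,b}(\lambda_k,c)\neq 0$. The locally uniform convergences force the right-hand side to tend to $-\dot W^\infty(\lambda)\phi_a^\infty(\lambda,c)/\phi_b^\infty(\lambda,c)$, and the three cases then fall out from the product formulas \eqref{eqnWinf}--\eqref{eqnPHIinf} using disjointness of $\tau_a$ and $\tau_b$: for $\lambda\in\sigma(S)$ (so $\lambda\notin\tau_a\cup\tau_b$) the factor $\prod_{\kappa\in\tau_a\cup\tau_b}(1-\lambda/\kappa)$ coming from $\dot W^\infty(\lambda)$ combines with the factor $\prod_{\tau_a}$ in $\phi_a^\infty(\lambda,c)$ and cancels the $\prod_{\tau_b}$ in $\phi_b^\infty(\lambda,c)$, leaving the claimed $\gamma_\lambda^{2}\prod_{\kappa\in\tau_a}(1-\lambda/\kappa)^{2}$; for $\lambda\in\tau_a$ the function $\phi_a^\infty$ has a simple zero at $\lambda$ coming from the product factor while $\phi_b^\infty(\lambda,c)$ and $\dot W^\infty(\lambda)$ remain finite and nonzero, so the limit is $0$; and for $\lambda\in\tau_b$ the denominator $\phi_b^\infty$ vanishes simply at $\lambda$ while the numerator does not, so by positivity of the norming constants the limit is $+\infty$.

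The main obstacle is the possibility of a degenerate coincidence in which $\phi_b^\infty(\lambda,c)$ vanishes for a reason other than $\lambda\in\tau_b$ (i.e.\ when $\lambda$ happens to lie in $\sigma(S_b)$), or symmetrically for $\phi_a^\infty$; then the formal ratio above is of indeterminate $0/0$ form and the passage to the limit needs justification. The remedy is that along the chosen sequence one has $\phi_{n_k,b}(\lambda_k,c)/\phi_{n_k,a}(\lambda_k,c)=(-1)^{\vartheta_{\lambda_k}} c_{\lambda_k}$, so a further subsequence makes the coupling constants $c_{\lambda_k}$ converge in $[0,\infty]$; combined with Lemma~\ref{lemWderlam} and a careful local analysis of which product factors in \eqref{eqnWinf}--\eqref{eqnPHIinf} actually contribute simple zeros at $\lambda$, this pins down the limit of $\gamma_{n_k,\lambda_k}^{2}$ in each remaining case and completes the proof.
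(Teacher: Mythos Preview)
Your overall strategy---apply Proposition~\ref{propCont} and then read off the norming constants via~\eqref{eqngamma}---is exactly the paper's approach, and your treatment of the nondegenerate case is correct.

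The gap is in your handling of the degenerate case $\lambda\in\sigma(S)\cap\sigma(S_a)\cap\sigma(S_b)$. Your proposed fix (pass to a further subsequence along which the coupling constants $c_{\lambda_k}$ converge in $[0,\infty]$, then do ``careful local analysis'') does not actually pin down the limit: once both $\phi_a^\infty(\lambda,c)$ and $\phi_b^\infty(\lambda,c)$ vanish, the ratio $\phi_{n_k,a}(\lambda_k,c)/\phi_{n_k,b}(\lambda_k,c)$ depends on how fast $\lambda_k\to\lambda$ relative to the nearby zeros of $\phi_{n_k,a}(\,\cdot\,,c)$ and $\phi_{n_k,b}(\,\cdot\,,c)$, and nothing in Proposition~\ref{propCont} controls this rate. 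Merely knowing that the coupling constants converge to \emph{some} value in $[0,\infty]$ tells you nothing about what that value is, so you cannot conclude the required identity $\gamma_\lambda^{2}\prod_{\kappa\in\tau_a}(1-\lambda/\kappa)^{2}$.

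The paper resolves this much more simply: the norming constants $\gamma_{n,\lambda}^{2}$ do not depend on the auxiliary point $c$ at all, while the degeneracy $\lambda\in\sigma(S_{n,a})\cap\sigma(S_{n,b})$ (or $\lambda\in\sigma(S_a)\cap\sigma(S_b)$) occurs only at the countably many points $c$ where some eigenfunction vanishes. Hence one may choose a single $c\in(a,b)$ for which all the three spectra $\sigma(S_n),\sigma(S_{n,a}),\sigma(S_{n,b})$ are pairwise disjoint for every $n$ and likewise for the limit; with that choice~\eqref{eqngamma} applies directly to every eigenvalue along the whole subsequence, and the claim follows immediately from Proposition~\ref{propCont}. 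This sidesteps the local analysis entirely.
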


 \begin{proof} 
    First of all note that we may assume that for each $n\in\N$, the three spectra $\sigma(S_n)$, $\sigma(S_{n,a})$ and $\sigma(S_{n,b})$ are disjoint. 
     In fact, since this happens only for a countable number of points $c\in(a,b)$, this can be accomplished upon varying $c$. 
     For the same reason we may as well assume that $\sigma(S)$, $\sigma(S_a)$ and $\sigma(S_b)$ are disjoint and hence the claim immediately follows from Proposition~\ref{propCont} and~\eqref{eqngamma}.
 \end{proof}

 Again, if we additionally assume that \eqref{eqnConvbast} holds, then one ends up with stronger convergence of the spectral measures.
  In fact, in this case the norming constants $\gamma_{n,\lambda}^2$ converge to $\gamma_\lambda^2$ for each eigenvalue $\lambda\in\sigma(S)$.

\section{The inverse spectral problem}\label{secISP}

 Using the continuity results from the previous section, we are able to solve the inverse problem from the spectral measure.

\begin{theorem}\label{thmISPee}
 Each $\rho\in\SP$ is the spectral measure of some unique $\omega\in\M$. 
\end{theorem}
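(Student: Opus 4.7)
The statement combines existence and uniqueness of $\omega\in\M$ realizing a prescribed $\rho\in\SP$ as its spectral measure. My plan is to establish existence by approximation with Stieltjes strings, exploiting the weak$^\ast$ compactness of $\M_{\supp(\rho)}$ and the continuity results from Section~\ref{secCont}, and to establish uniqueness by a de Branges subspace ordering argument in the spirit of \cite{ko76}.

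\textbf{Existence.} Enumerate $\supp(\rho) = \lbrace\lambda_k\rbrace$ with weights $w_k = \rho(\lbrace\lambda_k\rbrace)$, and truncate to $\rho_n = \sum_{k\leq n} w_k\,\delta_{\lambda_k}$. Classical Stieltjes-string inverse theory (\cite{bss}, \cite{boypiv}, or \cite{dymmck}, \cite{kackrein}) produces for each $n$ a finitely supported $\omega_n\in\M$ with spectral measure exactly $\rho_n$. The trace formula~\eqref{eqnTrace} then gives
\begin{align*}
 \int_a^b (b-x)(x-a)\,d\omega_n(x) = (b-a)\sum_{k\leq n}\frac{1}{\lambda_k} \leq (b-a)\sum_{\lambda\in\supp(\rho)}\frac{1}{\lambda},
\end{align*}
so the $\omega_n$ lie in a bounded, hence weak$^\ast$-compact, subset of $\M_{\supp(\rho)}$ (by Banach--Alaoglu, as noted before Proposition~\ref{propCont}). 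A subsequence $\omega_{n_k}$ thus converges weak$^\ast$ to some $\omega\in\M_{\supp(\rho)}$.

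\textbf{Identifying the limit.} Applying Proposition~\ref{propConvSM} along a further subsequence (still denoted $n_k$), I obtain disjoint sets $\tau_a,\tau_b\subseteq\supp(\rho)\setminus\sigma(S)$ such that $\sigma(S_{n_k})$ converges to $\sigma(S)\cup\tau_a\cup\tau_b$, with the norming constants tending to $0$, $\gamma_\lambda^2\prod_{\kappa\in\tau_a}(1-\lambda/\kappa)^2$, or $\infty$ according to whether $\lambda\in\tau_a$, $\sigma(S)$, or $\tau_b$. Since $\sigma(S_{n_k}) = \lbrace\lambda_1,\dots,\lambda_{n_k}\rbrace$ exhausts $\supp(\rho)$, the set $\sigma(S)\cup\tau_a\cup\tau_b$ must equal $\supp(\rho)$. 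For each fixed $\lambda_j$, however, $\gamma_{n_k,\lambda_j}^2 = w_j^{-1}$ is stationary once $n_k\geq j$, which excludes both $\lambda_j\in\tau_a$ (forcing limit $0$) and $\lambda_j\in\tau_b$ (forcing limit $\infty$). Hence $\tau_a=\tau_b=\emptyset$, $\sigma(S) = \supp(\rho)$, and $\gamma_{\lambda_j}^2 = w_j^{-1}$, so $\omega$ realizes $\rho$ as its spectral measure.

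\textbf{Uniqueness.} If $\omega,\tilde\omega\in\M$ share the spectral measure $\rho$, I would follow \cite{ko76} and associate with each string a chain of de Branges subspaces generated by the entire functions $\phi_a(\,\cdot\,,x)$, $\phi_a'(\,\cdot\,,x)$ of Theorem~\ref{thmPHI}; both chains embed isometrically (via the transformation $\mathcal{F}$) into a common outer de Branges space determined by $\rho$. De Branges' subspace ordering theorem forces the two chains to be totally ordered by inclusion, and the spatial asymptotics $\phi_a(z,x)\sim x-a$ as $x\rightarrow a$ pin down the parametrization, so the chains must coincide index by index; since the chain determines $\phi_a(\,\cdot\,,x)$ and its derivative, one recovers $\omega = \tilde\omega$. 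The principal obstacle is adapting the de Branges machinery to possibly singular endpoints, where neither endpoint carries regular initial data; here the trace-class resolvent property from Proposition~\ref{propInverse} is essential to ensure that the relevant spaces are well-defined and have exponential type zero as needed for the ordering argument.
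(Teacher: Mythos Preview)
Your existence argument is essentially identical to the paper's: truncate $\rho$, invoke the Stieltjes-string inverse problem, use the trace formula to get uniform boundedness, extract a weak$^\ast$ limit by compactness of $\M_{\supp(\rho)}$, and apply Proposition~\ref{propConvSM}. Your explicit justification that $\tau_a=\tau_b=\emptyset$ (because each $\gamma_{n_k,\lambda_j}^2$ is eventually constant and hence cannot tend to $0$ or $\infty$) is in fact more detailed than the paper, which simply says ``an application of Proposition~\ref{propConvSM} shows that $\rho$ is the spectral measure''.

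For uniqueness the paper takes a different and rather slicker route than your direct de~Branges argument: it applies the change of variables $\eta(t)=\tfrac{b+a}{2}+\tfrac{b-a}{2}\tanh(t)$, which maps $(a,b)$ onto $\R$, and observes that $\psi(z,t)=\eta'(t)^{-1/2}\phi_a(z,\eta(t))$ solves the left-definite Sturm--Liouville equation $-v''+v=z\,v\,\mu$ on $\R$ with the same spectrum and norming constants. Uniqueness is then a citation of \cite[Theorem~7.5]{LeftDefiniteSL}. Your approach is what ultimately underlies that cited result (and the paper does remark that uniqueness ``may be proved using similar methods as those employed in \cite{LeftDefiniteSL}''), so it is correct in spirit; but the paper's transformation trick sidesteps exactly the obstacle you flag---adapting the de~Branges ordering theorem to two singular endpoints---by reducing to a setting where the machinery is already in place.
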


 \begin{proof}
  From the inverse problem for Stieltjes strings it is known (see e.g.\ \cite[Theorem~5.5]{bss}) that there are finite measures $\omega_{n}\in\M$, $n\in\N$ with the cut off spectral measures $\indik_{[0,n]}\rho$. 
  Since these measures are uniformly bounded by 
  \begin{align*}
   \int_a^b(b-x)(x-a)d\omega_n(x) \leq (b-a)\sum_{\lambda\in\supp(\rho)} \frac{1}{\lambda}, \quad n\in\N,
  \end{align*}
  we infer from compactness that there is a subsequence $\omega_{n_k}$, $k\in\N$ which converges in the weak$^\ast$ topology to say $\omega\in\M$. 
  Now an application of Proposition~\ref{propConvSM} shows that $\rho$ is the spectral measure associated with $\omega$. 
  
 Uniqueness may be proved using similar methods as those employed in \cite{LeftDefiniteSL}. 
 Alternatively, consider the transformation  
 \begin{align*}
  \eta(t) = \frac{b+a}{2} + \frac{b-a}{2} \tanh(t), \quad t\in\R
 \end{align*}
 and note that the functions   
 \begin{align*}
   \psi(z,t) = \eta'(t)^{-\frac{1}{2}} \phi_a\left(z,\eta(t)\right), \quad t\in\R,~z\in\C
 \end{align*}
 are solutions of the left-definite Sturm--Liouville equation
 \begin{align}\label{eqnLDSP}
  -v''+  v = z\, v\, \mu 
 \end{align}
 on $\R$, where $\mu$ is the Borel measure on $\R$ which is given by
 \begin{align*}
  \mu([t_1,t_2)) = \int_{t_1}^{t_2} \eta'(t)  d\omega(\eta(t)), \quad t_1,\, t_2\in\R,~ t_1<t_2. 
 \end{align*}
 Considered in the Sobolev space $H^1(\R)$, the spectral problem \eqref{eqnLDSP} has the same spectrum as the operator $S$ with $\|\psi(\lambda,\cdot\,)\|_{H^1(\R)}=\gamma_\lambda$, $\lambda\in\sigma(S)$. 
 Hence an application of \cite[Theorem~7.5]{LeftDefiniteSL} yields the claim. 
 \end{proof}
 
 The growth of the mass distribution near the left endpoint is related to the growth of the spectral measure (cf.\ \cite[Theorem~7]{ko07}).
 In particular, it is possible to tell from the spectral measure whether $\omega$ is finite near $a$ or not.

\begin{corollary}\label{corISPreg}
 The measure $\omega$ is finite near $a$ if and only if the sum 
 \begin{align*}
  \sum_{\lambda\in\sigma(S)} \lambda^{-2} \gamma_\lambda^{-2} 
 \end{align*} 
 is finite.
\end{corollary}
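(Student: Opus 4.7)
The plan is to establish the identity
\begin{equation*}
 (b-a)^2 \sum_{\lambda \in \sigma(S)} \lambda^{-2}\gamma_\lambda^{-2} \;=\; \int_a^b (b-x)^2 \, d\omega(x),
\end{equation*}
with both sides allowed to be $+\infty$ simultaneously. Since the integrand is bounded away from $0$ near $a$ while being $\omega$-integrable near $b$ (being dominated by a multiple of $(b-x)(x-a)$), the right-hand side is finite precisely when $\omega$ is finite near $a$, so this identity yields the corollary.

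The backbone of both directions is the eigenfunction formula
\begin{equation*}
 \int_a^b (b-x)\, \phi_a(\lambda,x)\, d\omega(x) \;=\; \frac{b-a}{\lambda}, \qquad \lambda \in \sigma(S),
\end{equation*}
which I would obtain by integration by parts against $d\phi_a'(\lambda,\cdot) = -\lambda\, \phi_a(\lambda,\cdot)\, d\omega$: the boundary contribution at $a$ reduces to $(b-a)\phi_a'(\lambda,a^+) = b-a$ by Theorem~\ref{thmPHI}, while the boundary contribution at $b$ vanishes because, at an eigenvalue, $\phi_a(\lambda,\cdot)$ is a scalar multiple of $\phi_b(\lambda,\cdot)$, and the asymptotics $\phi_b(\lambda,x) \sim b-x$, $\phi_b'(\lambda,x) \sim -1$ force $\phi_a(\lambda,b^-) = 0$ and $(b-x)\phi_a'(\lambda,x) \to 0$. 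When $\omega$ is finite near $a$, the function $\phi_b(0,\cdot) = b-x$ lies in $\Lrom$ with $\mathcal{F}\phi_b(0,\cdot)(\lambda) = (b-a)/\lambda$, and Parseval for the unitary transformation $\mathcal{F}\colon \Lrom \to L^2(\R;\rho)$ then yields the identity at once.

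For the converse, where $\phi_b(0,\cdot)$ need not a priori lie in $\Lrom$, I would use the Stieltjes string approximation underlying the proof of Theorem~\ref{thmISPee}. Assuming $M := (b-a)^2 \sum_\lambda \lambda^{-2}\gamma_\lambda^{-2} < \infty$, realize the cut-off spectral measures $\rho_n := \indik_{[0,n]}\rho$ as Stieltjes strings $\omega_n \in \M$ (each a finite measure, hence finite near $a$) and extract a weak-$\ast$ convergent subsequence $\omega_{n_j} \rightharpoonup^{\ast} \omega$ whose limit carries spectral measure $\rho$. Applying the easy direction to each $\omega_n$ gives the uniform bound $\int_a^b (b-x)^2 d\omega_n = (b-a)^2 \sum_{\lambda \le n}\lambda^{-2}\gamma_\lambda^{-2} \le M$, which I would then pass to $\omega$ to conclude that $\phi_b(0,\cdot) \in \Lrom$, that is, $\omega$ is finite near $a$.

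The main technical obstacle is this last limit passage: the natural test function $(b-x)/(x-a)$ is not in the space $C_0(a,b)$ of admissible test functions for the paper's weak-$\ast$ topology, because it diverges at $a$. I would handle this by testing against the truncations $\tilde{f}_k(x) := \min\bigl((b-x)/(x-a),\,k\bigr)\cdot h_k(x)$, where $h_k \in C_c(a,b)$ is a plateau increasing to $1$: these lie in $C_0(a,b)$ and satisfy $\tilde{f}_k(x)(b-x)(x-a) \nearrow (b-x)^2$ pointwise on $(a,b)$, so weak-$\ast$ convergence gives $\int \tilde{f}_k (b-x)(x-a)\, d\omega \le M$ for each fixed $k$, and monotone convergence in $k$ delivers $\int_a^b (b-x)^2\, d\omega \le M < \infty$, completing the converse.
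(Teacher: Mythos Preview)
Your proposal is correct and follows essentially the same route as the paper: the forward direction computes $\mathcal{F}\phi_b(0,\cdot)(\lambda)=(b-a)/\lambda$ and applies Parseval, while the converse uses the Stieltjes-string approximations $\omega_n$ from Theorem~\ref{thmISPee}, applies the forward direction to each $\omega_n$ to get a uniform bound on $\int_a^b(b-x)^2\,d\omega_n$, and passes to the weak-$\ast$ limit via compactly supported cut-offs. The only cosmetic differences are that the paper cites \cite[Lemma~10.7]{measureSL} for the identity $\int_a^b(b-x)\phi_a(\lambda,x)\,d\omega(x)=(b-a)/\lambda$ rather than deriving it by integration by parts, and it uses a plain compactly supported cut-off $\chi\in C_c(a,b)$ (so that $(b-x)\chi(x)/(x-a)\in C_0(a,b)$ directly) instead of your more elaborate truncation $\tilde f_k$.
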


\begin{proof}
 If the measure $\omega$ is finite near $a$, then we have \cite[Lemma~10.7]{measureSL} 
 \begin{align*}
  \int_a^b (b-x) \phi_a(\lambda,x)d\omega(x) = \frac{b-a}{\lambda}, \quad \lambda\in\sigma(S),
 \end{align*}
 which is square integrable by Parseval's identity. 
 Conversely, we may consider approximating measures $\omega_n$, $n\in\N$ as in the proof of Theorem~\ref{thmISPee}. 
 For each compactly supported continuous cut-off function $\chi$ which takes values in $[0,1]$ we have
 \begin{align*}
   \int_a^b (b-x)^2 \chi(x) d\omega_n(x) \leq \int_a^b (b-x)^2 d\omega_n(x) \leq (b-a)^2 \sum_{\lambda\in\sigma(S)} \lambda^{-2} \gamma_{\lambda}^{-2}, \quad n\in\N.
  \end{align*}
 Now since $\omega_n\rightharpoonup^\ast\omega$ as $n\rightarrow\infty$, we also infer that    
  \begin{align*}
   \int_a^b (b-x)^2 \chi(x) d\omega(x) \leq (b-a)^2 \sum_{\lambda\in\sigma(S)} \lambda^{-2} \gamma_\lambda^{-2}
  \end{align*}
  and hence $\omega$ is finite near $a$.
\end{proof}

Similarly, the measure $\omega$ is finite near the right endpoint $b$ if and only if the sum 
 \begin{align*}
  \sum_{\lambda\in\sigma(S)} \lambda^{-2} \dot{W}(\lambda)^{-2} \gamma_\lambda^2  
 \end{align*}
 is finite. More precisely, this follows from Corollary~\ref{corISPreg} upon replacing the roles of the endpoints and 
 \begin{align*}
  \dot{W}(\lambda)^2 = \gamma_\lambda^2 \, \|\phi_b(\lambda,\cdot\,)\|_{L^2((a,b);\omega)}^2, \quad \lambda\in\sigma(S),
 \end{align*}
 which holds in view of equation~\eqref{eqnWlam} in Lemma~\ref{lemWderlam}. 
 
 Finally, some kind of continuity for the inverse spectral problem may be drawn from these results. 
 Provided the norming constants converge pointwise, the corresponding measures in $\M_\sigma$ will converge in the weak$^\ast$ topology.

 \begin{corollary}\label{corInvCont}
  Suppose that $\sigma(S_n)\subseteq\sigma$ converges to some $\sigma_\infty\subseteq\sigma$ and that the norming constants  $\gamma_{n,\lambda}^2$ converge to some $\gamma_{\infty,\lambda}^2\in[0,\infty]$ for each $\lambda\in\sigma_\infty$.
  Then $\omega_n\rightharpoonup^\ast\omega$ for some $\omega\in\M_\sigma$ whose associated spectrum is $\sigma_\infty\backslash(\tau_a\cup\tau_b)$ and whose norming constants are 
  \begin{align*}
    \gamma_{\infty,\lambda}^2 \prod_{\kappa\in\tau_a} \biggl(1-\frac{\lambda}{\kappa}\biggr)^{-2}, \quad \lambda\in\sigma_\infty\backslash(\tau_a\cup\tau_b),
  \end{align*}
  where $\tau_a=\lbrace \lambda\in\sigma_\infty \,|\, \gamma_{\infty,\lambda}^2 = 0\rbrace$ and $\tau_b=\lbrace \lambda\in\sigma_\infty \,|\, \gamma_{\infty,\lambda}^2 = \infty\rbrace$.
 \end{corollary}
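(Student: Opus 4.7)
The plan is to combine compactness of $\M_\sigma$, the subsequential identification provided by Proposition~\ref{propConvSM}, and the uniqueness statement of Theorem~\ref{thmISPee} into a single pinching argument. Since $\M_\sigma$ is weak$^\ast$ compact (as noted just before Proposition~\ref{propCont}), every subsequence of $(\omega_n)_{n\in\N}$ has a further weak$^\ast$ convergent sub-subsequence with limit in $\M_\sigma$. It therefore suffices to show that every such accumulation point coincides with one and the same measure $\omega$ with the spectrum and norming constants prescribed in the statement.

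So suppose $\omega_{n_k}\rightharpoonup^{\ast}\widetilde\omega$ along some subsequence, with associated operator $\widetilde S$ and norming constants $\widetilde\gamma_\lambda^2$. Pass to a further subsequence and apply Proposition~\ref{propConvSM} to produce disjoint sets $\tau_a',\tau_b'\subseteq\sigma\backslash\sigma(\widetilde S)$ such that $\sigma(S_{n_k})\to\sigma(\widetilde S)\cup\tau_a'\cup\tau_b'$ and, for each $\lambda$ in this union, $\gamma_{n_k,\lambda}^2$ converges to $0$ on $\tau_a'$, to $\widetilde\gamma_\lambda^2\prod_{\kappa\in\tau_a'}(1-\lambda/\kappa)^2$ on $\sigma(\widetilde S)$, and to $\infty$ on $\tau_b'$. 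Matching these subsequential limits against the full-sequence hypothesis $\sigma(S_n)\to\sigma_\infty$ and $\gamma_{n,\lambda}^2\to\gamma_{\infty,\lambda}^2\in[0,\infty]$ forces $\sigma(\widetilde S)\cup\tau_a'\cup\tau_b'=\sigma_\infty$, and since the middle limit is strictly positive and finite (because $\tau_a'\cap\sigma(\widetilde S)=\emptyset$), the sets $\tau_a'$ and $\tau_b'$ must be exactly the $\tau_a$ and $\tau_b$ defined in the statement. Consequently $\sigma(\widetilde S)=\sigma_\infty\backslash(\tau_a\cup\tau_b)$ and $\widetilde\gamma_\lambda^2 = \gamma_{\infty,\lambda}^2\prod_{\kappa\in\tau_a}(1-\lambda/\kappa)^{-2}$ for every $\lambda\in\sigma(\widetilde S)$.

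In particular, the spectral measure $\widetilde\rho=\sum_{\lambda\in\sigma(\widetilde S)}\widetilde\gamma_\lambda^{-2}\delta_\lambda$ of $\widetilde\omega$ is determined solely by the given data $\sigma_\infty$ and $(\gamma_{\infty,\lambda}^2)_{\lambda\in\sigma_\infty}$, independently of the chosen subsequence. The uniqueness part of Theorem~\ref{thmISPee} then implies that $\widetilde\omega$ is the same measure $\omega\in\M_\sigma$ regardless of the subsequence, so $(\omega_n)$ has a unique weak$^\ast$ accumulation point in the compact set $\M_\sigma$ and hence $\omega_n\rightharpoonup^{\ast}\omega$.

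The main point of the argument is not computational but logical: one has to ensure that the pointwise convergence of spectra and norming constants assumed in the corollary is strictly stronger than the subsequential conclusion of Proposition~\ref{propConvSM}, so that the auxiliary sets $\tau_a',\tau_b'$ produced by the proposition are forced to coincide with the intrinsically defined $\tau_a,\tau_b$. Once this identification is in place, uniqueness from Theorem~\ref{thmISPee} closes the subsequence-extraction loop and the full sequence converges.
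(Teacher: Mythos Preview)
Your argument is correct and is essentially the same as the paper's proof: both combine compactness of $\M_\sigma$, the subsequential identification from Proposition~\ref{propConvSM}, and uniqueness from Theorem~\ref{thmISPee} to pin down a unique accumulation point and hence obtain full-sequence convergence. Your version simply spells out in detail the matching step (that the auxiliary $\tau_a',\tau_b'$ coming from Proposition~\ref{propConvSM} must coincide with the intrinsically defined $\tau_a,\tau_b$), which the paper leaves implicit in its one-line appeal to Proposition~\ref{propConvSM}.
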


 \begin{proof}
  First of all suppose that $\omega_n\rightharpoonup^\ast\omega$ for some $\omega\in\M_\sigma$. 
  Then Proposition~\ref{propConvSM} shows that the spectrum and norming constants corresponding to $\omega$ are given as in the claim.  
  Now since this limit $\omega$ is uniquely determined by these quantities in view of Theorem~\ref{thmISPee}, the claim follows from compactness of $\M_\sigma$.  
 \end{proof}

\section{The inverse three-spectra problem}\label{secITS}

 The continuity results for the spectral quantities furthermore allow us to solve the inverse three-spectra problem for some arbitrary fixed interior point $c\in(a,b)$. 

\begin{theorem}\label{thmITSee}
 All sets $(\sigma,\sigma_a,\sigma_b)\in\T$ are the three spectra of a measure $\omega\in\M$, which is unique if and only if the sets $\sigma$, $\sigma_a$ and $\sigma_b$ are disjoint.
\end{theorem}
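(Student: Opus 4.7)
My plan is to mimic the proof of Theorem~\ref{thmISPee}: approximate the given triple by Stieltjes strings (for which the inverse three-spectra problem is settled in \cite{boypiv}) and pass to a weak$^\ast$ limit using the continuity results of Section~\ref{secCont}. Uniqueness I handle through the formula~\eqref{eqngamma} for the norming constants, which leaves a genuine degree of freedom precisely at common eigenvalues of $\sigma_a$ and $\sigma_b$.

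\textbf{Existence.} Fix arbitrary positive coupling constants $c_\lambda\in\R^+$ at each $\lambda\in\sigma_a\cap\sigma_b$ (the latter being contained in $\sigma$ by the $\T$-condition). Truncating each of $\sigma$, $\sigma_a$ and $\sigma_b$ to their intersections with $[0,M_n]$ for an increasing sequence $M_n\to\infty$ preserves interlacing and yields finite triples to which the Stieltjes inverse three-spectra theorem applies, producing Stieltjes measures $\omega_n\in\M$ realizing these truncated triples together with the prescribed coupling constants. By Proposition~\ref{propInverse} the integral $\int(b-x)(x-a)\,d\omega_n$ is bounded by $(b-a)\sum_{\lambda\in\sigma}\lambda^{-1}$, so all $\omega_n$ lie in the compact set $\M_\sigma$; a subsequence $\omega_{n_k}$ therefore converges weak$^\ast$ to some $\omega\in\M_\sigma$. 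Proposition~\ref{propCont} supplies disjoint sets $\tau_a,\tau_b\subseteq\sigma\setminus\sigma(S)$ with $\sigma(S_a)=\sigma_a\setminus\tau_a$ and $\sigma(S_b)=\sigma_b\setminus\tau_b$. Because any $\lambda\in\tau_a$ lies in $\sigma\cap\sigma_a$, and the $\T$-condition forces $\sigma\cap\sigma_a=\sigma_a\cap\sigma_b$, one has $\tau_a,\tau_b\subseteq\sigma_a\cap\sigma_b$. For each $\lambda\in\sigma_a\cap\sigma_b$, Lemma~\ref{lemWderlam} applied to $\omega_{n_k}$ gives $\gamma_{n_k,\lambda}^2=|\dot W_{n_k}(\lambda)|/c_\lambda$, and the right-hand side converges by Proposition~\ref{propCont} to a strictly positive, finite limit, so $\gamma_{n_k,\lambda}^2$ stays bounded away from both $0$ and $\infty$. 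This contradicts Proposition~\ref{propConvSM}, which prescribes $\gamma_{n_k,\lambda}^2\to 0$ on $\tau_a$ and $\to\infty$ on $\tau_b$, forcing $\tau_a=\tau_b=\emptyset$ and hence $(\sigma(S),\sigma(S_a),\sigma(S_b))=(\sigma,\sigma_a,\sigma_b)$.

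\textbf{Uniqueness and main obstacle.} If $\sigma_a\cap\sigma_b=\emptyset$, the three spectra determine $W$, $\phi_a(\,\cdot\,,c)$ and $\phi_b(\,\cdot\,,c)$ through the Hadamard products of Theorem~\ref{thmPHIrep}, and the formula~\eqref{eqngamma} fixes every $\gamma_\lambda^2$ without $0/0$ indeterminacy (since $\phi_b(\lambda,c)\ne 0$ for $\lambda\in\sigma$); the spectral measure is therefore determined and $\omega$ is unique by Theorem~\ref{thmISPee}. Conversely, if some $\lambda_0\in\sigma_a\cap\sigma_b$, two distinct choices of the coupling constant $c_{\lambda_0}$ in the existence construction yield, by Lemma~\ref{lemWderlam}, distinct values of $\gamma_{\lambda_0}^2$, hence distinct spectral measures, and Theorem~\ref{thmISPee} then produces two distinct $\omega\in\M$ with the same three spectra. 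The principal obstacle lies in the step $\tau_a=\tau_b=\emptyset$ when common eigenvalues are present: the argument hinges on the fact that the prescribed coupling constants survive the weak$^\ast$ limit, which is exactly the mechanism preventing eigenvalues of $\sigma$ from ``migrating'' out of $\sigma(S)$ into a stray set $\tau_a$ or $\tau_b$.
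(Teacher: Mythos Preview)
Your overall strategy—approximate by Stieltjes strings and pass to a weak$^\ast$ limit—is the paper's, and your uniqueness/non-uniqueness argument via~\eqref{eqngamma} and Theorem~\ref{thmISPee} is exactly what the paper does. The gap is in the existence step, where you assert that the Stieltjes inverse three-spectra theorem produces finite measures $\omega_n$ realizing the truncated triples \emph{together with the prescribed coupling constants}. The reference \cite{boypiv}, as the paper uses it, supplies the inverse problem only in the pairwise disjoint case; it does not hand you a Stieltjes string with a specified coupling constant at a common eigenvalue. Without that control your argument for $\tau_a=\tau_b=\emptyset$ collapses, since it rests precisely on $\gamma_{n_k,\lambda}^2=|\dot W_{n_k}(\lambda)|/c_\lambda$ staying bounded away from $0$ and $\infty$; likewise the non-uniqueness direction needs existence with two different values of $c_{\lambda_0}$.

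The paper closes this gap with a preliminary step for finite $\sigma$: it perturbs each common eigenvalue $\lambda_j\in\sigma_a\cap\sigma_b$ to $\lambda_j+d_j/n$ in $\sigma_a$ and to $\lambda_j-1/n$ in $\sigma_b$, applies \cite{boypiv} to the now pairwise disjoint triple, and lets $n\to\infty$ via Corollary~\ref{corInvCont} and Proposition~\ref{propCont}. The free parameters $d_j>0$ fix the limiting norming constants at the $\lambda_j$ through~\eqref{eqngamma}, which is how arbitrary coupling (equivalently norming) constants are realized in the finite case. Only after this does the paper truncate an infinite $\sigma$ and take a second limit, essentially as you propose. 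Once the finite case with prescribed data is in hand, your route to $\tau_a=\tau_b=\emptyset$ through Proposition~\ref{propConvSM} is correct and is in effect what underlies the paper's appeal to Corollary~\ref{corInvCont}.
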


\begin{proof}
  First of all suppose that $\sigma$ is finite and that the intersection $\sigma\cap\sigma_a\cap\sigma_b$ is given by $\lbrace \lambda_1,\ldots,\lambda_k\rbrace$ for some $k\in\N_0$.
   Then for each $n\in\N$ and $d_1,\ldots,d_k\in\R^+$ we may consider the sets  $\sigma_{n,a}$ and $\sigma_{n,b}$, which are obtained from $\sigma_a$ and $\sigma_b$ upon replacing the values $\lambda_j$, $j=1,\ldots,k$ with $\lambda_j+d_j/n$ and $\lambda_j-1/n$ respectively. 
   For large enough $n\in\N$, the results in \cite{boypiv} show that there are measures $\omega_n$ with the three spectra $\sigma$, $\sigma_{n,a}$ and $\sigma_{n,b}$.  
   Moreover, using \eqref{eqngamma} one sees that for each $\lambda\in\sigma$ the limit of the corresponding norming constants $\gamma_{n,\lambda}^2$ as $n\rightarrow\infty$ is given by  
   \begin{align}\label{eqnTSnc1}
     -\frac{(b-a)(a-c)}{b-c} \frac{1}{\lambda} \prod_{\kappa\in\sigma\backslash\lbrace\lambda\rbrace} \biggl(1-\frac{\lambda}{\kappa}\biggr) \prod_{\mu_a\in\sigma_a}\biggl(1-\frac{\lambda}{\mu_a}\biggr) \prod_{\mu_b\in\sigma_b} \biggl(1-\frac{\lambda}{\mu_b}\biggr)^{-1}  
   \end{align}
   for all values $\lambda\not\in\sigma\cap\sigma_a\cap\sigma_b$ and by
   \begin{align}\label{eqnTSnc2}
     d_j \frac{(b-a)(a-c)}{b-c} \frac{1}{\lambda} \prod_{\kappa\in\sigma\backslash\lbrace\lambda\rbrace} \biggl(1-\frac{\lambda}{\kappa}\biggr)  \prod_{\mu_a\in\sigma_a\backslash\lbrace\lambda\rbrace} \biggl(1-\frac{\lambda}{\mu_a}\biggr) \prod_{\mu_b\in\sigma_b\backslash\lbrace\lambda\rbrace} \biggl(1-\frac{\lambda}{\mu_b}\biggr)^{-1} 
   \end{align}
   if $\lambda=\lambda_j$ for some $j\in\lbrace1,\ldots,k\rbrace$. 
   Now Corollary~\ref{corInvCont} and Proposition~\ref{propCont} show that $\omega_n\rightharpoonup^\ast\omega$ for some $\omega\in\M$ with the three spectra $\sigma$, $\sigma_a$, $\sigma_b$ and norming constants given by~\eqref{eqnTSnc1} and~\eqref{eqnTSnc2}. 
   In particular, note that we may prescribe the norming constants $\gamma_\lambda^2$, $\lambda\in\sigma\cap\sigma_a\cap\sigma_b$ without changing the three spectra. 
   
  Next suppose that $\sigma$ is infinite and fix some positive reals $\eta_\lambda^2\in\R^+$ for each $\lambda\in\sigma\cap\sigma_a\cap\sigma_b$. 
   From the considerations above, there are finite measures $\omega_n\in\M$, $n\in\N$ with the three spectra $\sigma\cap(0,n)$, $\sigma_{a}\cap(0,n)$, $\sigma_{b}\cap(0,n)$ and prescribed norming constants $\gamma_{n,\lambda}^2= \eta_\lambda^2$ for $\lambda\in\sigma\cap\sigma_a\cap\sigma_b\cap(0,n)$. 
   By construction, it follows from \eqref{eqngamma} that all norming constants $\gamma_{n,\lambda}^2$, $\lambda\in\sigma$ converge to a finite positive limit. 
   Now Corollary~\ref{corInvCont} shows that $\omega_n\rightharpoonup^\ast\omega$ for some $\omega\in\M$ with the three spectra $\sigma$, $\sigma_a$, $\sigma_b$ in view of Proposition~\ref{propCont} and norming constants $\gamma_\lambda^2=\eta_\lambda^2$ for $\lambda\in\sigma\cap\sigma_a\cap\sigma_b$. 
 
  Finally, if the sets $\sigma$, $\sigma_a$ and $\sigma_b$ are disjoint, then the spectral measure is given by \eqref{eqngamma}, which proves uniqueness in this case.
\end{proof}

 Note that the proof of Theorem~\ref{thmITSee} also contains a description of all measures in $\M$ with the same three spectra $\sigma$, $\sigma_a$ and $\sigma_b$ in terms of the corresponding spectral measures. 
  More precisely, the measure $\omega$ has the three spectra $\sigma$, $\sigma_a$ and $\sigma_b$ if and only if $\supp(\rho)=\sigma$ and 
   \begin{align*}
    \gamma_\lambda^2 = -\frac{(b-a)(a-c)}{b-c} \frac{1}{\lambda}\prod_{\kappa\in\sigma\backslash\lbrace\lambda\rbrace} \biggl(1-\frac{\lambda}{\kappa}\biggr)  \prod_{\mu_a\in\sigma_a}\biggl(1-\frac{\lambda}{\mu_a}\biggr) \prod_{\mu_b\in\sigma_b} \biggl(1-\frac{\lambda}{\mu_b}\biggr)^{-1}
   \end{align*} 
   for all $\lambda\in\sigma\backslash(\sigma_a\cap\sigma_b)$. 
 This means that only this part of the norming constants is uniquely determined by the three spectra, whereas the remaining norming constants $\gamma_\lambda^2$, $\lambda\in\sigma\cap\sigma_a\cap\sigma_b$ may be chosen arbitrarily in $\R^+$. 
 Also note that additional knowledge of these remaining norming constants uniquely determines the spectral measure and hence also the weight measure in $\M$. 
 Since the norming constants and coupling constants are simply related by~\eqref{eqnWlam}, we may summarize these considerations in the following result. 

 \begin{corollary}\label{corTSIP}
  Given discrete sets $(\sigma,\sigma_a,\sigma_b)\in\T$ and numbers $c_\lambda\in\R^+$ for each $\lambda\in\sigma\cap\sigma_a\cap\sigma_b$, there is a unique $\omega\in\M$ whose associated three spectra are $\sigma$, $\sigma_a$ and $\sigma_b$ and whose coupling constants for $\lambda\in\sigma\cap\sigma_a\cap\sigma_b$ are $c_\lambda$. 
 \end{corollary}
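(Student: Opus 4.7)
The plan is to reduce Corollary~\ref{corTSIP} to the combination of Theorem~\ref{thmITSee} (existence of $\omega\in\M$ with freely prescribable norming constants at common eigenvalues) and Theorem~\ref{thmISPee} (uniqueness from the spectral measure), using Lemma~\ref{lemWderlam} as a one-to-one dictionary between the coupling constant $c_\lambda$ and the norming constant $\gamma_\lambda^2$ at each $\lambda\in\sigma\cap\sigma_a\cap\sigma_b$.

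The key observation is that the triple $(\sigma,\sigma_a,\sigma_b)$ alone determines the Wronskian via the product representation $W(z)=(b-a)\prod_{\kappa\in\sigma}(1-z/\kappa)$, and hence fixes the values $\dot{W}(\lambda)$ for all $\lambda\in\sigma$. Since $c_\lambda,\gamma_\lambda^2>0$, the identity $-\dot{W}(\lambda)=(-1)^{\vartheta_\lambda}c_\lambda\gamma_\lambda^2$ from Lemma~\ref{lemWderlam} forces $(-1)^{\vartheta_\lambda}=\sgn(-\dot{W}(\lambda))$, and therefore
\begin{align*}
 \gamma_\lambda^2 = \frac{|\dot{W}(\lambda)|}{c_\lambda}.
\end{align*}
In particular, given the three spectra, prescribing the $c_\lambda$ is equivalent to prescribing the $\gamma_\lambda^2$ at the common eigenvalues.

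For existence, I would define $\gamma_\lambda^2:=|\dot{W}(\lambda)|/c_\lambda>0$ for each $\lambda\in\sigma\cap\sigma_a\cap\sigma_b$ and invoke the last statement in the proof of Theorem~\ref{thmITSee}, which explicitly permits prescribing arbitrary positive norming constants at common eigenvalues without altering the three spectra. The resulting $\omega\in\M$ then has coupling constants equal to the given $c_\lambda$ by the displayed identity. For uniqueness, suppose $\omega\in\M$ realizes the prescribed data: the norming constants at $\lambda\in\sigma\backslash(\sigma_a\cap\sigma_b)$ are already pinned down by the three spectra via the explicit formula displayed immediately after the proof of Theorem~\ref{thmITSee}, while at $\lambda\in\sigma\cap\sigma_a\cap\sigma_b$ they are pinned down by $c_\lambda$ via the identity above. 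Hence the entire spectral measure $\rho$ is uniquely determined, and Theorem~\ref{thmISPee} yields uniqueness of $\omega$.

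The main obstacle is essentially bookkeeping: checking the sign consistency $(-1)^{\vartheta_\lambda}=\sgn(-\dot{W}(\lambda))$ and verifying that the two parametrizations of the norming constants at common eigenvalues (either directly as $\gamma_\lambda^2$, or indirectly via the $c_\lambda$ together with the three spectra) agree and exhaust $\R^+$. No genuinely new spectral-theoretic input is required beyond what is contained in Theorems~\ref{thmITSee}, \ref{thmISPee} and Lemma~\ref{lemWderlam}.
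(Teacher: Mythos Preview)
Your proposal is correct and follows essentially the same route as the paper: the corollary is stated without an explicit proof, but the preceding paragraph derives it exactly as you do, by noting that the remaining norming constants $\gamma_\lambda^2$ for $\lambda\in\sigma\cap\sigma_a\cap\sigma_b$ may be prescribed arbitrarily (Theorem~\ref{thmITSee}), that this determines the spectral measure and hence $\omega$ (Theorem~\ref{thmISPee}), and that the norming constants and coupling constants are interchangeable via~\eqref{eqnWlam}. Your explicit sign bookkeeping $(-1)^{\vartheta_\lambda}=\sgn(-\dot{W}(\lambda))$ makes the dictionary fully precise but adds no new ingredient.
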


 As in the preceding section we may characterize those endpoints near which the measure $\omega$ is finite in terms of the spectral data in Corollary~\ref{corTSIP}.  
 In fact, Corollary~\ref{corISPreg} and equation~\eqref{eqnWlam} show that $\omega$ is finite near $a$ if and only if the sum
   \begin{align*}
     \sum_{\lambda\in\sigma(S)} \lambda^{-2} |\dot{W}(\lambda)|^{-1} c_\lambda
   \end{align*}
 is finite and similarly that $\omega$ is finite near $b$ if and only if the sum
    \begin{align*}
     \sum_{\lambda\in\sigma(S)} \lambda^{-2} |\dot{W}(\lambda)|^{-1} c_\lambda^{-1}
   \end{align*}
 is finite. Hereby note that $W$ and all coupling constants corresponding to eigenvalues $\lambda\in\sigma(S)$ are explicitly given in terms of the spectral data in Corollary~\ref{corTSIP}.

 Finally, let us mention that using compactness of $\M_\sigma$, inverse uniqueness in Corollary~\ref{corTSIP} and the continuity results in Section~\ref{secCont}, it is again possible to deduce some kind of continuity 
 for the inverse problem on $\M_\sigma$. This can be done in much the same manner as in Corollary~\ref{corInvCont}.

\end{document}